\documentclass{article}
\usepackage{amsmath, amssymb, amsthm, graphicx}
\usepackage[margin=1in]{geometry}
\usepackage{subcaption}
\usepackage{hyperref}
\usepackage{enumitem}
\usepackage{algorithm}
\usepackage{algpseudocode}
\usepackage{booktabs}
\usepackage{multirow}
\usepackage{xcolor}
\usepackage{listings}

\usepackage{algorithm}
\usepackage{algorithmicx}
\usepackage{algpseudocode}

\algnewcommand\algorithmicinitialization{\textbf{Initialization:}}
\algnewcommand\Initialization{\item[\algorithmicinitialization]}

\newcommand{\Input}[1]{\State \textbf{Input:} #1}
\newcommand{\Output}[1]{\State \textbf{Output:} #1}

\newcommand{\norm}[1]{\left\lVert #1\right\rVert}
\newcommand{\abs}[1]{\left\lvert #1\right\rvert}
\newcommand{\F}{\mathrm{F}}

\newtheorem{theorem}{Theorem}[section]
\newtheorem{lemma}{Lemma}[section]
\newtheorem{definition}{Definition}[section]
\newtheorem{proposition}{Proposition}[section]
\newtheorem{corollary}{Corollary}[section]
\newtheorem{remark}{Remark}[section]
\newtheorem{example}{Example}[section]

\title{Efficient Low-Rank Approximation of Quaternion Matrices via Adaptive Cross Approximation}

\author{
    Salman Ahmadi-Asl \\
    \small Innopolis University, Innopolis, Russia \\
    \small \texttt{s.ahmadiasl@innopolis.ru}
    \and
    Yang Zhang \\
    \small University of Manitoba, Winnipeg, Canada \\
    \small \texttt{yang.zhang@umanitoba.ca}
}

\date{}

\begin{document}

\maketitle

\begin{abstract}
The adaptive cross approximation (ACA) algorithm is a powerful tool for constructing low-rank matrix approximations using only a small subset of matrix entries. While extensively studied for real and complex matrices, its extension to quaternion algebras, remains largely unexplored. This paper presents the first comprehensive framework for adaptive cross approximation operating natively in the quaternion algebra $\mathbb{H}$. Unlike approaches that rely on real embeddings or complexifications, our method preserves the algebraic structure of quaternions throughout the computation, avoiding information loss and maintaining the natural correlations between quaternion components. We develop a fully quaternionic ACA algorithm with rigorous theoretical foundations, including convergence analysis, error bounds, and complexity estimates. The algorithm achieves $\mathcal{O}(r(m+n))$ complexity per iteration and demonstrates quasi-optimal approximation properties comparable to the quaternion singular value decomposition (QSVD), but with significantly reduced computational cost. We provide a detailed MATLAB  with numerical experiments on color image compression. Our results show that native quaternion ACA outperforms significantly the basic SVD algorithm in terms of computational efficiency, with very close approximation accuracy.  
%The codes are accessible at \url{https://github.com/SalmanAhmadi-Asl/Quaternion-ACA.git}.
\end{abstract}

\section{Introduction}
Low-rank matrix approximations are fundamental to modern computational science, enabling efficient storage, processing, and analysis of large-scale data. The adaptive cross approximation (ACA) method \cite{bebendorf2000approximation,bebendorf2003adaptive,Tyrtyshnikov2000,Savostyanov2006} has emerged as a particularly attractive technique due to its algebraic nature and data efficiency: it constructs low-rank representations using only a strategically chosen subset of matrix entries, without requiring knowledge of the underlying matrix generation mechanism. It has been used in many applications such as solving large dense linear systems \cite{frederix2010solving}, ill-posed problems \cite{mach2016adaptive}, boundary element formulation \cite{haider2024generalised}, tensor computations \cite{ttcross_original,ahmadi2024adaptive}  and analysis of electromagnetic problems. \cite{jiang2011modified}. While classical ACA operates over the real or complex fields, many contemporary applications demand algebraic structures beyond these classical settings. Quaternions, the four-dimensional non-commutative division algebra discovered by Hamilton in 1843, have found widespread applications in color image processing \cite{q_lowrank, luo2024quaternion}, vector-sensor signal processing \cite{le2004singular}, robotics, and computer graphics \cite{dual}, quantum mechanics, and relativity. For numerical linear algebra for solving quaternion matrix equations, quaternion matrix operations, and low-rank approximation of them, we refer to \cite{yu2020generalized,ahmadi2017efficient,ahmadi2017iterative,ahmadi2025pass,xie2024bicg,leplat2026iterative}.
The computational challenges posed by quaternion matrices are substantial. Traditional approaches often rely on \textit{real embedding} techniques, where quaternion matrices are expanded into larger real matrices (typically $4m \times 4n$ for an $m \times n$ quaternion matrix). While this enables the use of standard real-valued algorithms, it suffers from several significant drawbacks:

\begin{enumerate}
    \item \textbf{Dimensionality Blow-up:} The $4\times$ increase in matrix dimensions leads to substantial memory and computational overhead.
    \item \textbf{Structure Loss:} The algebraic relationships between quaternion components are not fully preserved, potentially leading to suboptimal approximations that violate quaternion algebraic properties.
    \item \textbf{Information Redundancy:} The real embedding contains redundant information that must be carefully managed to recover valid quaternion results.
    \item \textbf{Color Coherence:} For color images, real embedding can destroy the natural coupling between RGB channels.
\end{enumerate}

Despite these limitations, real embedding remains the predominant approach for quaternion matrix computations, primarily due to the availability of robust real-valued algorithms. However, this reliance on embedding represents a significant missed opportunity: a native quaternion algorithm that preserves algebraic structure could achieve superior performance and accuracy.

This paper presents the first comprehensive framework for adaptive cross approximation operating natively in the quaternion algebra. Our main contributions are:

\begin{enumerate}
    \item \textbf{Algebraic Foundation:} We develop the theoretical underpinnings of low-rank quaternion matrix approximation, establishing connections between quaternionic rank, SVD, and ACA convergence.
    
    \item \textbf{Native Quaternion ACA:} We present a fully quaternionic ACA algorithm that operates directly on quaternion entries, preserving the algebraic structure throughout. The algorithm handles the non-commutativity of quaternion multiplication appropriately and maintains the natural correlations between quaternion components.
    
    \item \textbf{Theoretical Analysis:} We provide rigorous convergence analysis, error bounds, and complexity estimates. We prove that the algorithm achieves quasi-optimal approximation properties comparable to the QSVD.
    
    \item \textbf{Practical Implementation:} We provide a detailed MATLAB implementation using the QTFM toolbox. %supporting both dense matrices and function handles for implicit matrices.
    
    \item \textbf{Experimental Validation:} We demonstrate the effectiveness of native quaternion ACA on synthetic data and color image compression, showing significant improvements over real embedding approaches.
\end{enumerate}

The remainder of this paper is organized as follows. In Section~\ref{sec:pre}, we present the necessary mathematical foundations of quaternion algebra and quaternion matrix theory, including definitions of quaternionic operations, rank, and the Quaternion Singular Value Decomposition (QSVD), and we discuss the motivations for developing native quaternion algorithms. Section~\ref{sec:aca} is devoted to the formulation of the native quaternion ACA (QACA) algorithm, detailing the algorithm's steps, the handling of non-commutative multiplication to select columns/rows. Section~\ref{sec:theory} provides a rigorous theoretical analysis, including proofs for rank reduction. The computation complexity is discussed in Section \ref{sec:complex}. Section~\ref{sec:experiments} presents numerical experiments on color image compression, comparing the native quaternion ACA against QSVD in terms of accuracy, speed, and memory usage. Finally, the potential extensions and application areas, with conclusions drawn in Section~\ref{sec:conclusion}.

\section{Quaternion Algebra and Matrix Theory}\label{sec:pre}
This section is dedicated to presenting preliminaries. We refer to \cite{rodman2014topics,zhang1997quaternions} for detailed proofs of some of the results.

\subsection{The Quaternion Algebra}

The quaternion algebra $\mathbb{H}$ is a four-dimensional real vector space with basis $\{1, \mathbf{i}, \mathbf{j}, \mathbf{k}\}$. A quaternion is expressed as
\[
q = q_0 + q_1\mathbf{i} + q_2\mathbf{j} + q_3\mathbf{k}, \quad q_0, q_1, q_2, q_3 \in \mathbb{R}.
\]

The multiplication of basis elements follows Hamilton's rules:
\begin{align}
\mathbf{i}^2 &= \mathbf{j}^2 = \mathbf{k}^2 = \mathbf{ijk} = -1, \\
\mathbf{ij} &= -\mathbf{ji} = \mathbf{k}, \quad
\mathbf{jk} = -\mathbf{kj} = \mathbf{i}, \quad
\mathbf{ki} = -\mathbf{ik} = \mathbf{j}.
\end{align}

This multiplication is associative but \textit{not commutative}, which fundamentally distinguishes quaternion algebra from real and complex algebras.

\begin{definition}[Quaternion Conjugate and Norm]
For $q = q_0 + q_1\mathbf{i} + q_2\mathbf{j} + q_3\mathbf{k} \in \mathbb{H}$, the \textit{conjugate} is
\[
\bar{q} = q_0 - q_1\mathbf{i} - q_2\mathbf{j} - q_3\mathbf{k},
\]
and the \textit{norm} is
\[
|q| = \sqrt{q\bar{q}} = \sqrt{q_0^2 + q_1^2 + q_2^2 + q_3^2}.
\]
The inverse of a nonzero quaternion is
\[
q^{-1} = \frac{\bar{q}}{|q|^2}.
\]
\end{definition}

\subsection{Quaternion Matrices}

An $m \times n$ quaternion matrix $A \in \mathbb{H}^{m \times n}$ can be decomposed into four real component matrices:
\[
A = A_0 + A_1\mathbf{i} + A_2\mathbf{j} + A_3\mathbf{k}, \quad A_t \in \mathbb{R}^{m \times n},\quad t=1,2,3.
\]
For a matrix $A \in \mathbb{H}^{m \times n}$, the notations $A(:, i)$ and $A(i, :)$ denote the $i$-th column and the $i$-th row of the matrix, respectively. Additionally, for subsets $I_k \subset I$ and $J_k \subset J$, the notation $A(I_k, J_k)$ denotes the submatrix obtained by taking the rows indexed by $I_k$ and the columns indexed by $J_k$. An element of a matrix $A$ is denoted either by $A(i,j)$ or $A_{i,j}$.

\begin{definition}[Quaternion Inner Product and Norm]
For $A,B \in \mathbb{H}^{m \times n}$, the Frobenius inner product is
\[
\langle A, B \rangle_F = \sum_{i,j} \overline{A_{ij}} B_{ij},
\]
and the Frobenius norm is
\[
\|A\|_F = \sqrt{\langle A, A \rangle_F} = \sqrt{\sum_{i,j} |A_{ij}|^2}.
\]
\end{definition}
The conjugate of a matrix $A=A_1+A_2\mathbf{i}+A_3\mathbf{j}+A_4\mathbf{k}\in\mathbb{H}^{m\times n}$ is denoted by $A^*=A^T_1-A_2^T\mathbf{i}-A_3^T\mathbf{j}-A_4^T\mathbf{k}$, where each $A_p$ is a real matrix and $A_p^T$ is its ordinary transpose. 
For quaternion vectors, the Euclidean norm is defined as $\|x\|=\sqrt{x^*x}$.
Because $\mathbb{H}$ is noncommutative, it is important to
distinguish between left and right quaternion vector spaces. In this
work, all column spaces are regarded as right quaternion vector
spaces.

%\begin{definition}[Right quaternion vector space]
%\label{def:right-quaternion-vector-space}
%A right quaternion vector space is a set $V$ equipped with vector
%addition and scalar multiplication from the right such that
%\[
%(x+y)q=xq+yq,
%\qquad
%x(q_1+q_2)=xq_1+xq_2,
%\]
%and
%\[
%x(q_1q_2)=(xq_1)q_2,
%\qquad
%x1=x,
%\]
%for all $x,y\in V$ and $q,q_1,q_2\in\mathbb{H}$. In particular, $\mathbb{H}^m$ is regarded as a right quaternion
%vector space with
%\[
%xq
%=
%\begin{bmatrix}
%x_1q\\
%\vdots\\
%x_mq
%\end{bmatrix},
%\qquad
%x\in\mathbb{H}^m,\quad q\in\mathbb{H}.
%\]
%\end{definition}

\begin{definition}[Right quaternion span]
\label{def:right-span}
Let $x_1,x_2,\ldots,x_s\in\mathbb{H}^m.$ Their right quaternion span is defined by
\[
\operatorname{span}_{\mathbb{H}}^{R}
\{x_1,x_2,\ldots,x_s\}
=
\left\{
\sum_{\ell=1}^{s}x_\ell q_\ell:
q_\ell\in\mathbb{H}
\right\}.
\]
For a single vector $x$, we have $\operatorname{span}_{\mathbb{H}}^{R}\{x\}
=
\{xq:q\in\mathbb{H}\}.$ The left quaternion span can be defined similarly but we do not use it throughout the paper. 
\end{definition}

\begin{definition}[Right linear independence]
\label{def:right-linear-independence}
Vectors $x_1,x_2,\ldots,x_s\in\mathbb{H}^m$
are called right-linearly independent over $\mathbb{H}$ if
\[
\sum_{\ell=1}^{s}x_\ell q_\ell=0,
\qquad
q_\ell\in\mathbb{H},
\]
implies $q_1=q_2=\cdots=q_s=0.$ Otherwise, they are called right-linearly dependent. Left linearly independence can be defined similarly. 
\end{definition}

\begin{definition}[Quaternion column space]
\label{def:quaternion-column-space}
Let $A\in\mathbb{H}^{m\times n}.$
The right column space of $A$ is defined by $
\mathcal{C}_{R}(A)
=
\{Ax:x\in\mathbb{H}^{n}\}.$
Equivalently, if $A=
\begin{bmatrix}
A_{1,:},A_{2,:},\cdots,A_{n,:}
\end{bmatrix},$
then $\mathcal{C}_{R}(A)
=
\operatorname{span}_{\mathbb{H}}^{R}
\{A_{:,1},A_{:,2},\ldots,A_{:,n}\}$. The left column space can be defined analogously and is denoted by $\mathcal{C}_{L}(A)$. 
\end{definition}

\begin{definition}[Quaternion row space]
\label{def:quaternion-row-space}
Let $A\in\mathbb{H}^{m\times n}.$
The left row space of $A$ is the left quaternion span of its
rows. Equivalently,
\[
\mathcal{R}_{L}(A)
=\{y^TA:y\in\mathbb{H}^{m}\}=
\operatorname{span}_{\mathbb{H}}^{L}
\{A_{1:},A_{2:},\ldots,A_{m:}\}.
\]
The right row space of $\mathcal{R}_{R}(A)$ can be defined analogously. 
\end{definition}

\begin{definition}[Right/left quaternion rank]
\label{def:right-quaternion-rank}
Let $A\in\mathbb{H}^{m\times n}.$
The right quaternion rank of $A$ is defined by
\[
{
\operatorname{rank}_{\mathbb{H}}(A)
=
\dim_{\mathbb{H}}^{R}
\mathcal{C}_{R}(A).
}
\]
Equivalently,
\[
\operatorname{rank}_{\mathbb{H}}(A)
=
\max
\left\{
s:
\text{$A$ contains $s$ right-linearly independent columns}
\right\}.
\]
The left quaternion rank can be defined similarly. In general it is not necessary to have $\mathcal{C}_{R}(A)=\mathcal{C}_{L}(A)$, or $\dim_{\mathbb{H}}^{R}
\mathcal{C}_{R}(A)=\dim_{\mathbb{H}}^{L}
\mathcal{C}_{R}(A),$ due to the non-commutativity of quaternions.
%Since, quaternion set is a division ring, we have $\dim_{\mathbb{H}}^{R}
%\mathcal{C}_{R}(A)=\dim_{\mathbb{H}}^{L}
%\mathcal{C}_{R}(A),$ 
 
%but in general it is not necessary to have $\mathcal{C}_{R}(A)=\mathcal{C}_{L}(A)$, due to the non-commutativity of quaternions.
\end{definition}

\begin{theorem}[Equality of row and column ranks]
\label{thm:quaternion-row-column-rank}
For every quaternion matrix $A\in\mathbb{H}^{m\times n},$ the dimensions of the right column space and left row space are equal.
Consequently,
\[
{
\dim_{\mathbb{H}}\mathcal{C}_{R}(A)
=
\dim_{\mathbb{H}}\mathcal{R}_{L}(A).
}
\]
Hence the quaternion rank may equivalently be defined as
\[
\operatorname{rank}_{\mathbb{H}}(A)
=
\dim_{\mathbb{H}}\mathcal{C}_{R}(A)
=
\dim_{\mathbb{H}}\mathcal{R}_{L}(A).
\]
\end{theorem}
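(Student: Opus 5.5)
The plan is to imitate the classical rank-factorization argument for row rank equals column rank over a field. It uses only that $\mathbb{H}$ is a division ring and that matrix multiplication is associative, and it keeps careful track of which side the scalars act on. Let $r=\dim_{\mathbb{H}}^{R}\mathcal{C}_R(A)$ and $s=\dim_{\mathbb{H}}^{L}\mathcal{R}_L(A)$. First I will note that over a division ring every finitely generated right (or left) vector space has a basis, and all bases have the same cardinality. This holds because the Steinitz exchange lemma uses only inverses of nonzero scalars, applied on the appropriate side. Hence both dimensions are well defined, and a spanning set can always be trimmed to a basis. I will then prove $s\le r$ and $r\le s$ separately, with symmetric arguments.

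\textbf{Step 1 ($s\le r$).} Choose a right basis $c_1,\dots,c_r\in\mathbb{H}^m$ of $\mathcal{C}_R(A)$, and let $C=[c_1,\dots,c_r]\in\mathbb{H}^{m\times r}$. Each column satisfies $A_{:,j}=\sum_{\ell}c_\ell q_{\ell j}$, where the scalars $q_{\ell j}\in\mathbb{H}$ sit on the right. Collecting them into $Q=(q_{\ell j})\in\mathbb{H}^{r\times n}$ gives the factorization $A=CQ$. Reading this row by row, $A_{i,:}=\sum_\ell C_{i\ell}\,Q_{\ell,:}$, so every row of $A$ is a \emph{left} combination of the $r$ rows of $Q$. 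It follows that $\mathcal{R}_L(A)\subseteq\operatorname{span}^{L}_{\mathbb{H}}\{Q_{1,:},\dots,Q_{r,:}\}$. Since a subspace of a left space spanned by $r$ vectors has dimension at most $r$ (exchange lemma again), we get $s\le r$.

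\textbf{Step 2 ($r\le s$).} This is the mirror argument. Choose a left basis $u_1,\dots,u_s$ of $\mathcal{R}_L(A)$ and stack these rows into $U\in\mathbb{H}^{s\times n}$. Each row satisfies $A_{i,:}=\sum_k p_{ik}u_k$ with left coefficients, so $A=PU$ with $P\in\mathbb{H}^{m\times s}$. Reading column by column, $A_{:,j}=\sum_k P_{:,k}U_{kj}$, so every column of $A$ is a \emph{right} combination of the $s$ columns of $P$. Therefore $\mathcal{C}_R(A)\subseteq\operatorname{span}_{\mathbb{H}}^{R}\{P_{:,1},\dots,P_{:,s}\}$, and $r\le s$.

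Combining the two steps gives $r=s$. This also shows that the definition of $\operatorname{rank}_{\mathbb{H}}$ as the maximal number of right-independent columns agrees with the dimension, since a maximal right-independent subset of the columns is a basis of their span. I expect the main obstacle to be the bookkeeping of sides, not any deep fact. It matters that pairing right column space with left row space is exactly what makes $A=CQ$ simultaneously express columns as right combinations and rows as left combinations; pairing with $\mathcal{R}_R$ would fail because $C_{i\ell}Q_{\ell,:}$ cannot be rewritten with the scalar on the right. The only genuine lemma needed is the invariance of dimension (the exchange lemma) over a noncommutative division ring. I would state it explicitly and remark that its standard proof goes through verbatim once each inverse is placed on the correct side.
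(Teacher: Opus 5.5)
Your argument is correct. The paper itself gives no proof of this theorem: it is stated as background in Section~\ref{sec:pre}, with the reader referred to \cite{rodman2014topics,zhang1997quaternions}, so there is no in-paper argument to compare with.

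The usual proofs take one of two routes. One is reduction to the normal form $PAQ=\begin{bmatrix} I_r&0\\0&0\end{bmatrix}$ by invertible elementary operations. The other goes through a complex or real representation. In this paper's language, that second route would combine items 5 and 7 of Proposition~\ref{thm_REP} with the real row-rank/column-rank theorem. It would also need the observation that entrywise conjugation maps the right column space of $A^*$ onto the left row space of $A$. It would rely on item 5, which the paper asserts without proof.

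Your rank-factorization route avoids both elimination and embeddings. It needs only the exchange lemma over a division ring, and it makes the side bookkeeping explicit. It also gives, essentially for free, a fact the paper uses implicitly. Any factorization $A=XY$ with $X\in\mathbb{H}^{m\times t}$ forces both dimensions to be at most $t$, and your Step 1 attains $t=r$. So $\operatorname{rank}_{\mathbb{H}}(A)$ equals the minimal inner dimension of a factorization $A=UV^T$, which is exactly the format of the QACA output (e.g.\ $A=U_rV_r^T$ in Theorem~\ref{thm:qaca-deflation}).

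Finally, your warning about pairing sides reflects a genuine failure of the statement, not just of the method. For $A=\begin{bmatrix}1&\mathbf{i}\\ \mathbf{j}&\mathbf{k}\end{bmatrix}$ the columns are right independent, so the right column rank is $2$. But $(\mathbf{j},\mathbf{k})=(1,\mathbf{i})\,\mathbf{j}$ shows the right row rank is $1$.
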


% \begin{definition}[Right quaternion vector space and right span]
% \label{def:right-quaternion-space}
% The space $\mathbb{H}^m$ is regarded as a right vector space over the
% division ring $\mathbb{H}$. Thus, for $x\in\mathbb{H}^m$ and
% $q\in\mathbb{H}$, scalar multiplication is defined by
% \[
% xq=
% \begin{bmatrix}
% x_1q\\
% \vdots\\
% x_mq
% \end{bmatrix}.
% \]
% For a vector $u\in\mathbb{H}^m$, its right quaternion span is
% \[
% \operatorname{span}_{\mathbb{H}}^{\,R}\{u\}
% =
% \{uq:q\in\mathbb{H}\}.
% \]
% More generally, for vectors
% $u_1,u_2,\ldots,u_s\in\mathbb{H}^m$,
% \[
% \operatorname{span}_{\mathbb{H}}^{\,R}
% \{u_1,\ldots,u_s\}
% =
% \left\{
% \sum_{\ell=1}^s u_\ell q_\ell:
% q_\ell\in\mathbb{H}
% \right\}.
% \]
% The column space of a quaternion matrix
% $A\in\mathbb{H}^{m\times n}$ is understood as a right quaternion
% subspace:
% \[
% \mathcal{C}(A)
% =
% \{Ax:x\in\mathbb{H}^n\}.
% \]
% \end{definition}

\begin{lemma}[Rank--nullity theorem over the quaternions]
\label{lem:quaternion-rank-nullity}
Let $T:V\rightarrow W$ be a right $\mathbb{H}$-linear transformation between finite-dimensional
right quaternion vector spaces. Then
\[
{
\dim_{\mathbb H}V
=
\dim_{\mathbb H}\ker(T)
+
\dim_{\mathbb H}\operatorname{Im}(T).
}
\]
In particular, if $B\in\mathbb{H}^{m\times n}$ is regarded as a
right-linear map
\[
B:\mathbb{H}^n\rightarrow\mathbb{H}^m,
\qquad
x\mapsto Bx,
\]
then
\[
{
n
=
\dim_{\mathbb H}\ker(B)
+
\operatorname{rank}_{\mathbb H}(B).
}
\]
\end{lemma}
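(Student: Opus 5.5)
The plan is to copy the classical proof for vector spaces over a field, using only the division-ring structure of $\mathbb{H}$ and never commutativity. The main tool is that every finite-dimensional right $\mathbb{H}$-vector space has a basis, and that every right-linearly independent family can be extended to a basis. Both facts follow from the Steinitz exchange argument. That argument only requires dividing by a nonzero scalar, and over $\mathbb{H}$ we do this on the right: if $\sum_\ell x_\ell q_\ell = 0$ with $q_s\neq 0$, then $x_s = -\sum_{\ell<s} x_\ell q_\ell q_s^{-1}$. I would briefly note this, or cite \cite{rodman2014topics,zhang1997quaternions}, so that $\dim_{\mathbb H}$ is well defined.

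First, $\ker(T)$ is a right subspace of $V$, and $\operatorname{Im}(T)$ is a right subspace of $W$. This holds because $T(xq)=T(x)q$ and $T$ is additive. Choose a basis $u_1,\dots,u_k$ of $\ker(T)$ and extend it to a basis $u_1,\dots,u_k,v_1,\dots,v_r$ of $V$. The claim is that $T(v_1),\dots,T(v_r)$ is a basis of $\operatorname{Im}(T)$.

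\emph{Spanning.} Any $x\in V$ can be written as $x=\sum_i u_i a_i+\sum_j v_j b_j$. Right-linearity then gives $T(x)=\sum_j T(v_j)b_j$.

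\emph{Independence.} Suppose $\sum_j T(v_j)b_j=0$. Then $T\bigl(\sum_j v_j b_j\bigr)=0$, so $\sum_j v_j b_j\in\ker(T)$. Hence $\sum_j v_j b_j=\sum_i u_i a_i$ for some scalars $a_i$. By independence of the full basis, all $b_j=0$.

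It follows that $\dim V = k+r = \dim\ker(T)+\dim\operatorname{Im}(T)$.

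For the matrix case, take $V=\mathbb{H}^n$ and $W=\mathbb{H}^m$. The map $x\mapsto Bx$ is right-linear because $B(xq)=(Bx)q$, which holds by associativity of quaternion multiplication entrywise. Its image is exactly $\mathcal{C}_R(B)$ by Definition~\ref{def:quaternion-column-space}, so its dimension is $\operatorname{rank}_{\mathbb H}(B)$ by Definition~\ref{def:right-quaternion-rank}. Finally, $\dim_{\mathbb H}\mathbb{H}^n=n$, since the standard unit vectors form a right basis.

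The only real obstacle is foundational: establishing that bases exist, can be extended, and have invariant cardinality over a noncommutative division ring. The plan is to handle this by the exchange lemma, taking care that all scalars act on the right.
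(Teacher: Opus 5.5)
Your proof is correct. The basis-extension argument goes through verbatim over the division ring $\mathbb{H}$, and you have located the only place where commutativity could enter: solving a dependence relation, which you handle by right multiplication by $q_s^{-1}$.

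The paper itself gives no proof of this lemma. It is stated among the preliminaries with only a blanket pointer to the monographs of Rodman and Zhang, so your write-up supplies a self-contained proof rather than paralleling one in the text.

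One small item belongs with your foundational remarks. Before choosing a basis of $\ker(T)$, you need that a right subspace of a finite-dimensional right space is itself finite-dimensional. This follows from the same exchange lemma: any right-independent family in $V$ has at most $\dim_{\mathbb H}V$ members, so a maximal one in $\ker(T)$ exists, and by your right-division step it spans $\ker(T)$.

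For comparison, the paper's own toolkit suggests a different route. Identify $\mathbb{H}^n$ with $\mathbb{R}^{4n}$ componentwise, so that $x\mapsto Bx$ becomes multiplication by $\Phi_{\mathbb{R}}(B)$, and apply real rank--nullity. That route has two drawbacks:
\begin{enumerate}
\item It only yields the matrix case.
\item It still needs $\dim_{\mathbb R}S=4\dim_{\mathbb H}S$ for right subspaces $S$, which presupposes $\mathbb{H}$-bases. So it does not avoid the foundational step you identified.
\end{enumerate}
Your intrinsic argument proves the abstract form directly. That is the form the paper actually uses in the deflation theorem, where rank--nullity is applied to the restriction $P|_{\mathcal{C}_R(R)}\colon\mathcal{C}_R(R)\to\mathcal{C}_R(R^{(k+1)})$ between subspaces, not to a matrix acting on $\mathbb{H}^n$.
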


%Let $\mathbb{H}$ denote the skew-field of real quaternions.
  For a quaternion $q = a + b\mathbf{i} + c\mathbf{j} + d\mathbf{k} \in \mathbb{H}$, with $a,b,c,d \in \mathbb{R}$, we define its standard real representation as the $4 \times 4$ real matrix
\[
\Phi_{\mathbb{R}}(q) = 
\begin{bmatrix}
a & -b & -c & -d \\
b & a & -d & c \\
c & d & a & -b \\
d & -c & b & a
\end{bmatrix} \in \mathbb{R}^{4 \times 4}.
\]

This map $\Phi: \mathbb{H} \to \mathbb{R}^{4 \times 4}$ is an injective real-algebra homomorphism. For a matrix $A = A_1+A_2\mathbf{i}+A_3\mathbf{j}+A_4\mathbf{k} \in \mathbb{H}^{m \times n}$, we define its \emph{real embedding} (or real representation) $\Phi_{\mathbb{R}}(A) \in \mathbb{R}^{4m \times 4n}$ as the block matrix obtained 
\[
\Phi_{\mathbb{R}}(A) = 
\begin{bmatrix}
A_1 & -A_2 & -A_3 & -A_4 \\
A_2 & A_1 & -A_4 & A_3 \\
A_3 & A_4 & A_1 & -A_2 \\
A_4 & -A_3 & A_2 & A_1
\end{bmatrix} \in \mathbb{R}^{4m \times 4n}.
\]

Since $\Phi_{\mathbb{R}}$ is injective but not surjective onto $\mathbb{R}^{4m \times 4n}$, its inverse is only defined on the image $\operatorname{Im}(\Phi_{\mathbb{R}})$, which is a proper subspace of $\mathbb{R}^{4m \times 4n}$ consisting of matrices with the special block structure given above. It is not difficult to show the next facts.

\begin{proposition}[Properties of the Real Embedding]\label{thm_REP}
The real embedding $\Phi_{\mathbb{R}}: \mathbb{H}^{m \times n} \to \mathbb{R}^{4m \times 4n}$ satisfies the following properties:

\begin{enumerate}
\item \textbf{Injectivity}: $\Phi_{\mathbb{R}}(Q) = 0$ if and only if $Q = 0$.

\item \textbf{Additivity}: For any $P, Q \in \mathbb{H}^{m \times n}$,
\[
\Phi_{\mathbb{R}}(P + Q) = \Phi_{\mathbb{R}}(P) + \Phi_{\mathbb{R}}(Q).
\]

\item \textbf{Multiplicativity}: For any $P \in \mathbb{H}^{m \times n}$ and $Q \in \mathbb{H}^{n \times p}$,
\[
\Phi_{\mathbb{R}}(P Q) = \Phi_{\mathbb{R}}(P) \, \Phi_{\mathbb{R}}(Q).
\]

\item \textbf{Homogeneity over $\mathbb{R}$}: For any $\alpha \in \mathbb{R}$ and $Q \in \mathbb{H}^{m \times n}$,
\[
\Phi_{\mathbb{R}}(\alpha Q) = \alpha \Phi_{\mathbb{R}}(Q).
\]

\item \textbf{Rank Preservation}: For any $Q \in \mathbb{H}^{m \times n}$,
\[
\operatorname{rank}_{\mathbb{R}}(\Phi_{\mathbb{R}}(Q)) = 4 \cdot \operatorname{rank}_{\mathbb{H}}(Q).
\]

\item \textbf{Norm Equivalence}: The Frobenius norms satisfy
\[
\|\Phi_{\mathbb{R}}(Q)\|_F = 2 \|Q\|_F.
\]

\item \textbf{Preservation of Conjugate Transpose}: For any $Q \in \mathbb{H}^{m \times n}$,
\[
\Phi_{\mathbb{R}}(Q^*) = \Phi_{\mathbb{R}}(Q)^T,
\]
where $Q^*$ denotes the quaternion conjugate transpose and $(\cdot)^T$ denotes the real transpose.
\end{enumerate}
\end{proposition}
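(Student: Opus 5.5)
The plan is to reduce every property to the scalar map $\Phi_{\mathbb{R}}:\mathbb{H}\to\mathbb{R}^{4\times 4}$, using the fact that, after a fixed row and column permutation, $\Phi_{\mathbb{R}}(A)$ is the $m\times n$ block matrix whose $(p,s)$ block is $\Phi_{\mathbb{R}}(A_{ps})$. Concretely, let $\Pi_m$ be the permutation sending the ordering (component, row index) to (row index, component). Then $\Pi_m^T\Phi_{\mathbb{R}}(A)\Pi_n=[\Phi_{\mathbb{R}}(A_{ps})]_{p,s}$, which is immediate from comparing the stated block formula with the scalar $4\times4$ matrix.

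\textbf{Linear properties.} Additivity and real homogeneity are clear, because each block $\pm A_t$ depends linearly on the real components $A_t$. Injectivity follows because the first block column of $\Phi_{\mathbb{R}}(Q)$ is $[A_1;A_2;A_3;A_4]$, so it vanishes only if $Q=0$.

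\textbf{Norm.} Each component matrix $A_t$ appears exactly four times in $\Phi_{\mathbb{R}}(Q)$, up to sign. Hence $\|\Phi_{\mathbb{R}}(Q)\|_F^2=4\sum_t\|A_t\|_F^2=4\|Q\|_F^2$.

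\textbf{Conjugate transpose.} I will verify $\Phi_{\mathbb{R}}(\bar q)=\Phi_{\mathbb{R}}(q)^T$ at the scalar level by inspection. Applying it to the permuted block form, and noting that transposition also swaps the block indices, gives $\Phi_{\mathbb{R}}(Q^*)=\Phi_{\mathbb{R}}(Q)^T$.

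\textbf{Multiplicativity.} First I check the scalar identity $\Phi_{\mathbb{R}}(pq)=\Phi_{\mathbb{R}}(p)\Phi_{\mathbb{R}}(q)$. It suffices to check it on the basis $1,\mathbf{i},\mathbf{j},\mathbf{k}$ by bilinearity: one verifies that $\Phi_{\mathbb{R}}(\mathbf{i})^2=-I$ and $\Phi_{\mathbb{R}}(\mathbf{i})\Phi_{\mathbb{R}}(\mathbf{j})=\Phi_{\mathbb{R}}(\mathbf{k})$, and similarly for the other products. This is equivalent to observing that $\Phi_{\mathbb{R}}(q)$ is the matrix of left multiplication $x\mapsto qx$ in the basis $1,\mathbf{i},\mathbf{j},\mathbf{k}$. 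Then block matrix multiplication in the permuted form gives the $(p,r)$ block of $\Phi_{\mathbb{R}}(P)\Phi_{\mathbb{R}}(Q)$ as $\sum_s\Phi_{\mathbb{R}}(P_{ps})\Phi_{\mathbb{R}}(Q_{sr})=\Phi_{\mathbb{R}}\big((PQ)_{pr}\big)$. Since $\Pi_n\Pi_n^T=I$, the permutations cancel in the product.

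\textbf{Rank preservation.} I expect this to be the main obstacle, because of the left/right issue. The plan is to use a quaternionic rank factorization. Let $r=\operatorname{rank}_{\mathbb{H}}(Q)$ and choose $r$ right-linearly independent columns forming $C\in\mathbb{H}^{m\times r}$. Then $Q=CX$ with $X\in\mathbb{H}^{r\times n}$, since every column of $Q$ is a right combination of the columns of $C$. Multiplicativity gives $\operatorname{rank}_{\mathbb{R}}\Phi_{\mathbb{R}}(Q)\le 4r$.

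For the reverse inequality, I will show that $\Phi_{\mathbb{R}}(C)$ has full column rank $4r$. Suppose $\Phi_{\mathbb{R}}(C)v=0$ for a real vector $v$. The key is to identify real vectors of length $4r$ with quaternion vectors in a way compatible with $\Phi_{\mathbb{R}}$: under the permutation, $\Phi_{\mathbb{R}}(C)v$ corresponds to $Cy$, where $y\in\mathbb{H}^r$ is built from $v$. This works because $\Phi_{\mathbb{R}}(c)\,\mathrm{vec}(y)=\mathrm{vec}(cy)$, by the left-multiplication interpretation. Right independence of the columns of $C$ forces $y=0$, hence $v=0$.

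Similarly, $X$ has full row rank $r$. Indeed, if its rows were left-dependent, Theorem~\ref{thm:quaternion-row-column-rank} (row rank equals column rank) would give $\operatorname{rank}_{\mathbb{H}}(X)<r$, and then $\operatorname{rank}_{\mathbb{H}}(Q)<r$ by the factorization. So $X$ has $r$ right-independent columns, and by the preceding argument $\Phi_{\mathbb{R}}(X)$ has rank $4r$. A product of a full column rank matrix and a full row rank matrix has rank $4r$, so $\operatorname{rank}_{\mathbb{R}}\Phi_{\mathbb{R}}(Q)=4r$.
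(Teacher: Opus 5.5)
The paper gives no proof of this proposition (it only remarks that the facts are ``not difficult to show''), so there is no argument by the authors to compare with. Your proposal is a complete and correct verification. Items 1, 2, 4, 6 and 7, and the scalar-then-block proof of multiplicativity, are the expected routine checks.

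Your identification of $\Phi_{\mathbb{R}}(q)$ with the matrix of $x\mapsto qx$ in the basis $1,\mathbf{i},\mathbf{j},\mathbf{k}$ is the key point. Write $x=x_1+x_2\mathbf{i}+x_3\mathbf{j}+x_4\mathbf{k}\in\mathbb{H}^n$ and $\mathrm{vec}(x)=[x_1;x_2;x_3;x_4]$. Then $\Phi_{\mathbb{R}}(Q)\,\mathrm{vec}(x)=\mathrm{vec}(Qx)$, since both are the first column of $\Phi_{\mathbb{R}}(Q)\Phi_{\mathbb{R}}(x)=\Phi_{\mathbb{R}}(Qx)$. Hence the range of $\Phi_{\mathbb{R}}(Q)$ is exactly $\mathrm{vec}(\mathcal{C}_R(Q))$, which is why item 5 holds for the \emph{right} column rank the paper uses.

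This left/right pairing, which you flagged as the main obstacle, is not cosmetic. Take $B=\begin{bmatrix}1&\mathbf{i}\\ \mathbf{j}&\mathbf{k}\end{bmatrix}$. Its second column is $\mathbf{i}$ times the first from the left, so the left column rank is $1$. On the other hand, $Bx=0$ with $x=(\xi,\eta)^T$ gives $\xi=-\mathbf{i}\eta$ and then $2\mathbf{k}\eta=0$. So the right rank is $2$, and $\Phi_{\mathbb{R}}(B)$ has rank $8$.

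With the identity $\Phi_{\mathbb{R}}(Q)\,\mathrm{vec}(x)=\mathrm{vec}(Qx)$ in hand, you can drop your treatment of $X$ in item 5. Since $\mathcal{C}_R(Q)=\mathcal{C}_R(C)$, you get $\operatorname{range}\Phi_{\mathbb{R}}(Q)=\operatorname{range}\Phi_{\mathbb{R}}(C)$. You have already shown that $\Phi_{\mathbb{R}}(C)$ is injective, so the rank is $4r$ directly, with no appeal to Theorem~\ref{thm:quaternion-row-column-rank}.

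If you keep the $X$ route, make two steps explicit: that $\operatorname{rank}_{\mathbb{H}}(CX)\le\operatorname{rank}_{\mathbb{H}}(X)$, and that $\Phi_{\mathbb{R}}(X(:,J))$ is a column submatrix of $\Phi_{\mathbb{R}}(X)$. The latter is what ``by the preceding argument'' actually uses.
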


\begin{remark}
The multiplicativity property is particularly important as it ensures that quaternion matrix products are faithfully represented in the real domain. However, the converse is not true: not every product of real matrices in $\mathbb{R}^{4m \times 4n}$ corresponds to a quaternion product, as the factors must lie in the image of $\Phi_{\mathbb{R}}$ for the product to have the required block structure.
\end{remark}

\begin{definition}[Inverse Real Embedding]
Let $M \in \operatorname{Im}(\Phi_{\mathbb{R}}) \subset \mathbb{R}^{4m \times 4n}$ be a real matrix that has the block structure
\[
M = 
\begin{bmatrix}
M_{11} & M_{12} & M_{13} & M_{14} \\
M_{21} & M_{22} & M_{23} & M_{24} \\
M_{31} & M_{32} & M_{33} & M_{34} \\
M_{41} & M_{42} & M_{43} & M_{44}
\end{bmatrix},
\]

where each $M_{ij} \in \mathbb{R}^{m \times n}$ satisfies the quaternion structure constraints:

\[
\begin{aligned}
M_{11} &= M_{22} = M_{33} = M_{44}, \\
M_{12}&=M_{34}=-M_{21}=-M_{43},\\
M_{13}&=M_{42}=-M_{31}=-M_{24},\\
M_{14}&=M_{23}=-M_{41}=-M_{32}.
\end{aligned}
\]
Then the inverse embedding $\Phi_{\mathbb{R}}^{-1}(M) \in \mathbb{H}^{m \times n}$ is given by
\[
\Phi_{\mathbb{R}}^{-1}(M) = M_{11} + M_{21}\mathbf{i} + M_{31}\mathbf{j} + M_{41}\mathbf{k}.
\]
\end{definition}

\begin{remark}[Projection onto the quaternion matrices]
For an arbitrary real matrix $M \in \mathbb{R}^{4m \times 4n}$, the inverse $\Phi_{\mathbb{R}}^{-1}$ is not defined. To project a general real matrix back to the quaternion domain, one must first extract the four $m \times n$ blocks from the first block column:
\[
\mathcal{P}_{\mathbb{H}}(M) = M_{11} + M_{21}\mathbf{i} + M_{31}\mathbf{j} + M_{41}\mathbf{k},
\]
where $M$ is partitioned as
\[
M = 
\begin{bmatrix}
M_{11} & M_{12} & M_{13} & M_{14} \\
M_{21} & M_{22} & M_{23} & M_{24} \\
M_{31} & M_{32} & M_{33} & M_{34} \\
M_{41} & M_{42} & M_{43} & M_{44}
\end{bmatrix},
\quad M_{ij} \in \mathbb{R}^{m \times n}.
\]
This projection discards the remaining three block columns, effectively forcing the matrix into the structured subspace. This operation is well-defined for any real matrix.
\end{remark}

%\subsection{Low-Rank Quaternion Matrices}
%
%\begin{definition}[Quaternionic Rank]
%For $A \in \mathbb{H}^{m \times n}$, the rank of $A$ is the smallest integer $r$ such that there exist $U \in \mathbb{H}^{m \times r}$ and $V \in \mathbb{H}^{n \times r}$ with
%\[
%A = U V^T.
%\]
%Equivalently, $\text{rank}_{\mathbb{H}}(A)$ is left quaternion rank (the dimension of the right quaternion span of $A$).
%
% 
%\end{definition}
%
%\begin{theorem}[Quaternion Singular Value Decomposition (QSVD)]
%For any $A \in \mathbb{H}^{m \times n}$, there exist quaternion unitary matrices $U \in \mathbb{H}^{m \times m}$ and $V \in \mathbb{H}^{n \times n}$ such that
%\[
%A = U \Sigma V^H,
%\]
%where $\Sigma \in \mathbb{R}^{m \times n}$ is a diagonal matrix with nonnegative real entries $\sigma_1 \ge \sigma_2 \ge \cdots \ge \sigma_{\min(m,n)} \ge 0$, and $V^H$ denotes the conjugate transpose of $V$ \cite{q_svd}.
%\end{theorem}
%
%\begin{corollary}[Eckart-Young-Mirsky Theorem for Quaternions]
%For any $A \in \mathbb{H}^{m \times n}$ and $r \le \min(m,n)$, the optimal rank-$r$ approximation in the Frobenius norm is
%\[
%A_r = U_r \Sigma_r V_r^H = \sum_{k=1}^r \sigma_k u_k v_k^H,
%\]
%and the minimal error is
%\[
%\|A - A_r\|_F^2 = \sum_{k=r+1}^{\min(m,n)} \sigma_k^2.
%\]
%\end{corollary}

For a quaternion matrix $A \in \mathbb{H}^{m \times n}$, the quaternion CUR (QCUR) or quaternion cross approximation approximation selects $r$ actual columns $C = A(:, J) \in \mathbb{H}^{m \times r}$ and $r$ actual rows $R = A(I, :) \in \mathbb{H}^{r \times n}$, and constructs
\[
A \approx CUR,
\]
where the middle factor is computed, either as
\[
U = C^{\dagger} A R^{\dagger} \in \mathbb{H}^{r \times r},
\]
or $U=W^\dagger$ with $W=A(I,J)$ and $\dagger$ denoting the quaternion Moore--Penrose pseudoinverse. Unlike the QSVD, the QCUR preserves the original data rows and columns, yielding interpretable factors and avoiding expensive decompositions, advantages that are particularly valuable for large-scale data processing.

\subsection{Why Native Quaternion Algorithms Matter}

The real embedding approach, while computationally convenient, introduces several fundamental issues:

\begin{enumerate}
    \item \textbf{Algebraic Inconsistency:} The real embedding $\mathbb{R}^{4m \times 4n}$ is significantly larger than the original quaternion space $\mathbb{H}^{m \times n}$. Not every low-rank approximation in $\mathbb{R}^{4m \times 4n}$ corresponds to a quaternion matrix, necessitating a projection step that may introduce errors.
    
    \item \textbf{Information Loss:} The real embedding treats the four quaternion components independently, potentially destroying the natural correlations captured by quaternion multiplication.
    
    \item \textbf{Color Coherence:} For color images represented as pure quaternions, real embedding separates the RGB channels, losing the natural color coherence that quaternion processing preserves.
    
    \item \textbf{Computational Overhead:} The $4\times$ increase in dimension translates to approximately $16\times$ more operations for matrix-matrix products.
    
    \item \textbf{Redundancy:} The real embedding contains redundant information (the real representation of a quaternion matrix is highly structured), leading to inefficient computations.
\end{enumerate}

A more critical issue with real embedding is demonstrated below. More precisely, it demonstrates that the naive approach of embedding quaternion matrices into the real domain, applying real algorithms, and projecting back is fundamentally flawed. Although the real embedding preserves the rank and the Frobenius norm up to a constant factor, it does not preserve the factorization structure. The real ACA algorithm (to be discussed in Section \ref{sec:aca}) produces an excellent approximation in the embedded space, but this approximation does not correspond to a valid quaternion low-rank factorization when projected back.

First, a synthetic low-rank quaternion matrix is constructed. The dimensions are set to $m = n = 100$, with a target quaternion rank of $r = 20$. Two random quaternion matrices $\texttt{Utrue} \in \mathbb{H}^{100 \times 20}$ and $\texttt{Vtrue} \in \mathbb{H}^{100 \times 20}$ are generated, with each of their four real components drawn from a standard normal distribution. The quaternion matrix $\texttt{A} \in \mathbb{H}^{100 \times 100}$ is then formed as the quaternion product $\texttt{A} = \texttt{Utrue} \cdot \texttt{Vtrue}^T$, where the transpose is denoted by $(\cdot)^T$. By construction, this matrix has quaternion rank at most $20$. Next, the quaternion matrix is embedded into the real domain, which yields a real matrix $\texttt{A\_real} \in \mathbb{R}^{400 \times 400}$. The real ACA algorithm is then applied to $\texttt{A\_real}$. The algorithm is called with parameters specifying a maximum rank of $4r + 3 = 83$ and a tolerance of $10^{-15}$. The ACA returns real low-rank factors $\texttt{U\_real} \in \mathbb{R}^{400 \times k}$ and $\texttt{V\_real} \in \mathbb{R}^{400 \times k}$, where $k=80$ is the rank determined by the algorithm. The quality of this real-domain approximation is assessed by computing the relative Frobenius norm error
\[
\frac{\| \texttt{A\_real} - \texttt{U\_real} \cdot \texttt{V\_real}^T \|_F}{\| \texttt{A\_real} \|_F}.
\]
For a typical execution, this error is approximately $8.1963 \times 10^{-11}$, indicating an excellent approximation in the real domain.

The real factors are then projected back to the quaternion domain. The computed real rank $k$ from ACA is divided by $4$ to obtain the conjectured quaternion rank, since theoretically $\operatorname{rank}_{\mathbb{R}}(\Phi_{\mathbb{R}}(A)) = 4 \cdot \operatorname{rank}_{\mathbb{H}}(A)$, shown in Theorem \ref{thm_REP}. The quaternion-domain approximation error is then computed as
\[
\frac{\| \texttt{A} - \texttt{U} \cdot \texttt{V}^* \|_F}{\| \texttt{A} \|_F}.
\]
This error is found to be approximately $2.6199$. This experiment demonstrates that the naive approach of embedding quaternion matrices into the real domain, applying real algorithms, and projecting back is fundamentally flawed. Although the real embedding preserves the rank and the Frobenius norm up to a constant factor, it does not preserve the factorization structure. The real ACA algorithm produces an excellent approximation in the embedded space, but this approximation does not correspond to a valid quaternion low-rank factorization when projected back. %Let us formalize this observation in the next remark.

%\begin{remark}
For a given quaternion matrix $Q\in\mathbb{H}^{m\times n}$, let $\texttt{U\_real} \in \mathbb{R}^{4m \times 4k}$ and $\texttt{V\_real} \in \mathbb{R}^{4n \times 4k}$ be real matrices obtained from a low-rank approximation of $\Phi_{\mathbb{R}}(Q)$. Let the projected quaternion factors be
\[
U = \mathcal{P}_{\mathbb{H}}(\texttt{U\_real})\in\mathbb{H}^{m\times  k}, \qquad V = \mathcal{P}_{\mathbb{H}}(\texttt{V\_real})\in\mathbb{H}^{m\times  k}.
\]
% Explicitly, if $U_{\mathbb{R}}$ is partitioned as
% \[
% U_{\mathbb{R}} = 
% \begin{bmatrix}
% U_1 \\ U_2 \\ U_3 \\ U_4
% \end{bmatrix},
% \quad U_1, U_2, U_3, U_4 \in \mathbb{R}^{m \times k},
% \]
% then
% \[
% U = U_1 + U_2\mathbf{i} + U_3\mathbf{j} + U_4\mathbf{k} \in \mathbb{H}^{m \times k}.
% \]
% Similarly, if
% \[
% V_{\mathbb{R}} = 
% \begin{bmatrix}
% V_1 \\ V_2 \\ V_3 \\ V_4
% \end{bmatrix},
% \quad V_1, V_2, V_3, V_4 \in \mathbb{R}^{n \times k},
% \]
% then
% \[
% V = V_1 + V_2\mathbf{i} + V_3\mathbf{j} + V_4\mathbf{k} \in \mathbb{H}^{n \times k}.
% \]
%\end{remark}

Our main observation is that the projection $\mathcal{P}_{\mathbb{H}}$ does not commute with matrix multiplication. In general, for arbitrary real matrices $\texttt{U\_real}$ and $\texttt{V\_real}$,
\[
\mathcal{P}_{\mathbb{H}}(\left(\texttt{U\_real}\right)\, \left(\texttt{V\_real}\right)^T) \neq \mathcal{P}_{\mathbb{R}}(\texttt{U\_real}) \, \mathcal{P}_{\mathbb{R}}(\texttt{V\_real})^*.
\]
When ACA is applied to $\Phi_{\mathbb{R}}(Q)$, it produces factors $\texttt{U\_real}$ and $\texttt{V\_real}$ that are not in the image of $\Phi_{\mathbb{R}}$. Although the product $U_{\mathbb{R}} V_{\mathbb{R}}^T$ is a good approximation of $\Phi_{\mathbb{R}}(Q)$ in the real Frobenius norm, the projected quaternion factors $U = \mathcal{P}_{\mathbb{R}}(\texttt{U\_real})$ and $V = \mathcal{P}_{\mathbb{R}}(\texttt{V\_real})$ satisfy
\[
U V^* = \mathcal{P}_{\mathbb{R}}(\texttt{U\_real}) \mathcal{P}_{\mathbb{R}}(\texttt{V\_real})^* \not\approx Q,
\]
because the quaternionic structure of the factors is not preserved during the real factorization.

The failure of this approach provides  strong motivation for developing a native quaternion ACA algorithm that operates directly in $\mathbb{H}^{m \times n}$ and produces factors that are true quaternion matrices, thereby preserving the algebraic structure throughout the approximation process. Such an algorithm must:

\begin{itemize}
    \item Operate directly on quaternion matrices without embedding.
    \item Respect the non-commutative multiplication of quaternions.
    \item Preserve the quaternionic structure throughout the factorization.
    \item Produce factors that are true quaternion matrices.
    \item Achieve approximation errors comparable to the real embedding but measured in the quaternion domain.
\end{itemize}

\section{Adaptive Cross Approximation in the Quaternion Algebra}\label{sec:aca}
The efficient compression of large-scale matrices is a cornerstone of modern scientific computing, particularly in applications involving integral equations, boundary element methods, and data-sparse representations. The ACA algorithm has emerged as a powerful technique for constructing low-rank approximations without requiring the full matrix to be explicitly assembled. When the underlying problem involves vector-valued or rotational phenomena, however, the natural mathematical framework is the quaternion algebra $\mathbb{H}$, where matrices encode interactions between three-dimensional quantities. Extending ACA to this non-commutative setting presents both theoretical challenges and practical opportunities that warrant careful examination.

\subsection{The Classical ACA Algorithm}

We first recall the classical ACA algorithm for real matrices \cite{bebendorf2000approximation,bebendorf2003adaptive,Tyrtyshnikov2000,Savostyanov2006}. Given $A \in \mathbb{R}^{m \times n}$, ACA constructs a rank-$k$ approximation $A \approx U V^T$ with $U \in \mathbb{R}^{m \times k}$, $V \in \mathbb{R}^{n \times k}$. The fundamental premise of ACA is that many matrices arising in practice exhibit low-rank structure, meaning that their entries can be accurately represented as the product of a column matrix $U \in \mathbb{R}^{m \times k}$ and a row matrix $V \in \mathbb{R}^{n \times k}$, where $k$ is significantly smaller than both $m$ and $n$. This low-rank property is often a consequence of the underlying physical problem, such as the smoothness of kernel functions in integral equations or the separation of variables in certain differential equations.

We first recall the classical ACA algorithm for real matrices. To do so, let us first consider the following algorithm:

\begin{enumerate}
    \item \textbf{Initialize:} $U = \emptyset$, $V = \emptyset$, residual $R^{(0)} = A$.
    \item \textbf{For $k = 1, 2, \ldots$:}
    \begin{enumerate}
        \item Find pivot $(i_k, j_k)$ such that $|R^{(k-1)}_{i_k j_k}|$ is maximal.
        \item Compute column $U(:,k) \gets R^{(k-1)}(:,j_k)$.
        \item Compute row $V(:,k) \gets R^{(k-1)}(i_k,:)^T$.
        \item Normalize: $V(:,k) \gets V(:,k) / V(i_k,j_k)$.
        \item Update residual: $R^{(k)} \gets R^{(k-1)} - U(:,k) V(:,k)^T$.
    \end{enumerate}
\end{enumerate}
This algorithm constructs a factorization $A\approx UV^T$ iteratively by identifying pivotal rows and columns that capture the dominant features of the matrix. At each iteration, the algorithm selects a pivot entry from the current residual matrix, typically the one with the largest absolute value, then extracts the corresponding residual row and column to augment the low-rank factors. It is worth noting that each iteration of this algorithm interpolates the column $j_k$ and row $i_k$ of the initial matrix $A$, see Figure \ref{fig:def} for a graphical illustration of this concept and more details will be discussed in subsection \ref{sub:NQACA}. More precisely, the residual This rank reduction process in linear algebra is known as deflation. The most expensive part of this algorithm is in Line (a), as it requires  selecting the maximum element of it in modulus in Line (a) and it costs $\mathcal{O}(mn)$, for a matrix of size $m\times n$. Also, in Line (e), the residual is computed, which is expensive.

The mathematical elegance of ACA lies in its residual update formula, which avoids constructing the full residual matrix. Instead, we look for only the maximum in the modulus of a row or a column. This leads to reduction of computational complexity from $\mathcal{O}(mn)$ to $\mathcal{O}(m+n)$. More precisely, each residual row or column is computed on demand using the current approximation:
\begin{eqnarray}
r(i_k) = R^{(k)}(i_k,:) - \sum_{t=1}^{k-1} U(i_k,t) V(:,t)^T,\\
r(j_k) = R^{(K)}(:,j_k) - \sum_{t=1}^{k-1} U(:,t) V(j,t)^T,    
\end{eqnarray}
depending on, row or column indices should be selected, respectively. This formula reveals that the algorithm maintains a rank-$k$ decomposition throughout its execution, and each iteration simply augments the existing approximation by one additional rank.

Several theoretical properties of classical ACA are worth noting \cite{bebendorf2000approximation,bebendorf2003adaptive,Tyrtyshnikov2000,Savostyanov2006}. First, the algorithm is guaranteed to converge for matrices that are numerically low-rank, with the convergence rate depending on the singular value decay of the matrix. Second, the pivot selection strategy based on maximum residual entry is not merely heuristic; it can be shown that the resulting approximation is quasi-optimal in the sense that the error is bounded by a constant multiple of the best rank-$k$ approximation error. Third, the computational complexity scales as $\mathcal{O}(k(m+n))$ per iteration, making it substantially more efficient than full singular value decomposition for large matrices.

\begin{figure}[htbp]
    \centering
\includegraphics[width=0.8\textwidth]{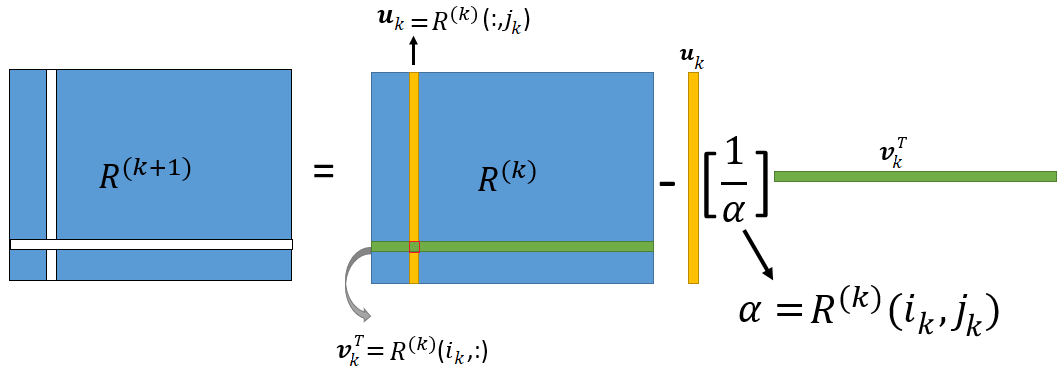}
    \caption{The deflation process in the ACA method is where, at each iteration, one column and one row corresponding to the selected pivot $(i_k,j_k)$ are zeroed out. The while column and row in the right-hand-side matrix mean a zero column and a zero row.}
    \label{fig:def}
\end{figure}

\subsection{Native Quaternion ACA Algorithm}\label{sub:NQACA}
Extending ACA to the quaternion algebra requires careful handling of non-commutativity. The direct generalization is non-trivial because the order of multiplication matters in $\mathbb{H}$, and the standard column normalization $V(:,k) = V(:,k) / V(i_k,j_k)$ must be reinterpreted in terms of quaternion multiplication. 

Our native quaternion ACA implementation, denoted QACA, follows the algorithmic structure presented in Algorithm~\ref{alg:quaternion_aca}, see the appendix, which describes the practical MATLAB implementation using the QTFM toolbox, covering quaternion array handling, arithmetic, normalization, pivot selection, and optimizations for function handles. This is analogous to the standard ACA for complex matrices, but adapted to the non-commutative quaternion setting. The algorithm constructs a rank-$k$ approximation $A \approx U V^T$ with $U \in \mathbb{H}^{m \times k}$ and $V \in \mathbb{H}^{n \times k}$, where $V^T$ denotes the quaternion  transpose. The main steps are as follows:

\paragraph{Initialization.}
The algorithm begins by pre-allocating quaternion factor matrices $U$ and $V$ with dimensions corresponding to the maximum allowed rank $r_{\max}$:
\[
U = \mathbf{0}\in\mathbb{H}^{m \times r_{\max}}, \quad V = \mathbf{0}\in\mathbb{H}^{n \times r_{\max}}.
\]
This pre-allocation strategy avoids dynamic resizing during the iterative process and improves performance. The Frobenius norm of the matrix $A_{\text{norm}} = \|A\|_F$ is computed for use in the stopping criterion. If $A_{\text{norm}} = 0$, the algorithm returns empty factors.

\paragraph{Initial Row Selection.}
Unlike classical ACA which typically selects the first pivot using the global maximum entry, our implementation finds the first non-zero row:
\[
i = \min\{r : \|A(r,:)\| > 0\}.
\]
This approach ensures robustness for matrices with structured sparsity patterns and avoids potential issues with zero matrices. The selected row is marked as used by maintaining boolean arrays of selected rows and columns. It is also possible to select the first row randomly. In this case, if a zero row is selected, one can randomly sample a set of \(m+n\) points from the data matrix to check whether the entire matrix is zero. This sampling step is not performed over the whole matrix to find the maximum of the residual, as previously mentioned, due to the high computational cost, (see Lines 10-15) of Algorithm \ref{alg:quaternion_aca}. Note that the MATLAB function ``randi'', generates uniformly distributed random integers.

\begin{algorithm}[H]
\caption{Native Quaternion Adaptive Cross Approximation (QACA)}
\label{alg:quaternion_aca}
\begin{algorithmic}[1]
\Input{$A \in \mathbb{H}^{m \times n}$ (target matrix), $\varepsilon > 0$ (tolerance), $r_{\max}$ (max rank)}
\Output{$U \in \mathbb{H}^{m \times k}$, $V \in \mathbb{H}^{n \times k}$ with $\widetilde{A}=A \approx U V^T$, rank $k$, relative error $err$}
\State $U \gets \mathbf{0}_{m \times r_{\max}}$, $V \gets \mathbf{0}_{n \times r_{\max}}$
\State $A_{\text{norm}} \gets \|A\|_F$; \textbf{if} $A_{\text{norm}} = 0$ \textbf{then return} $U,V \gets \emptyset$
\State rows $\gets \emptyset$, cols $\gets \emptyset$
\State $i_1 \gets \min\{p : \|A(p,:)\| > 0\}$; rows $\gets$ rows $\cup \{i_1\}$  
\For{$k = 1$ to $r_{\max}$}
    \State $r \gets A(i_k,:) - U(i_k,1:k-1) V(:,1:k-1)^T$  \Comment{Residual row}
    \State $j_k \gets \arg\max_{t \notin \text{cols}} |r(t)|$
    \If{$|r(j_k)| \le \varepsilon A_{\text{norm}}$}
    
    \State \(\mathcal{S} = \{(i_s, j_s) := (\text{randi}(m), \text{randi}(n))\}\), \(\#\mathcal{S} \sim m + n\)
    \State  \(\widetilde{A} := U(:,1:k-1)V(:,1:k-1)^T \); 
    \State \(\text{err}_\mathcal{S}^2 := \sum_{\mathcal{S}} |(A - \widetilde{A})(i_s, j_s)|^2\), \(\text{nrm}_\mathcal{S}^2 := \sum_{\mathcal{S}} |A(i_s, j_s)|^2\)
    \State \((i_k, j_k) := \max_{\mathcal{S}}(\text{abs}(A - \widetilde{A}))\)
    \State \textbf{if} \(\text{err}_\mathcal{S} \le \varepsilon \text{nrm}_\mathcal{S}\) \textbf{then return} \( U,\, V\) of rank \(r := k\).
    \textbf{break} \EndIf
    \State $\alpha \gets r(j_k)$; 
    \State $c \gets A(:,j_k) - U(:,1:k-1) V(j,1:k-1)^T$  \Comment{Residual column}
    \State $U(:,k) \gets c$; $V(:,k) \gets \alpha^{-1} r^T$
    \State $i \gets \arg\max_{t \notin \text{rows}} |c(t)|$; rows $\gets$ rows $\cup \{i_k\}$; cols $\gets$ cols $\cup \{j_k\}$
\EndFor
\State $U \gets U(:,1:k)$, $V \gets V(:,1:k)$ \textbf{if} $k>0$
\State $err \gets \|A - U V^T\|_F / A_{\text{norm}}$ \textbf{if} $k>0$ \textbf{else} $err \gets 1$
\State \Return $U, V,$ rank$(k)$, error$(err)$
\end{algorithmic}
\end{algorithm}

\paragraph{Iterative Rank-One Updates.}
At each iteration $k = 1, 2, \ldots, r_{\max}$, the algorithm performs the following operations:

\subparagraph{Residual Row Computation.}
The residual row at the current pivot row $i$ is computed as:
\[
r = A(i,:) - U(i,1:k-1) V(:,1:k-1)^T,
\]
where $r \in \mathbb{H}^{1 \times n}$. The term $U(i,1:k-1) \in \mathbb{H}^{1 \times (k-1)}$ and $V(:,1:k-1)^T \in \mathbb{H}^{(k-1) \times n}$ are multiplied using quaternion arithmetic, with the transpose applied to the $V$ factor. This residual row represents the error in approximating the $i$-th row of $A$ using the current $k-1$ rank approximation.

\subparagraph{Pivot Column Selection.}
The pivot column $j$ is selected from the set of unselected columns as:
\[
j = \arg\max_{j \notin \text{cols}} |r(j)|.
\]
This strategy focuses on columns where the current approximation is poorest, ensuring that the algorithm captures the dominant residual features. The selection is governed by the stopping criterion:
\[
|r(j)| \leq \varepsilon \cdot A_{\text{norm}},
\]
where $\varepsilon$ is the user-specified tolerance. Additionally, the pivot value $\alpha = r(j)$ is checked for near-zero magnitude, the algorithm check whether we have selected a zero row or not and depending  on that it terminates to avoid numerical instability or continue by selecting a new pivots using the sampling a part of the matrix components, see Lines 10-16 in Algorihm \ref{alg:quaternion_aca}.

\subparagraph{Residual Column Computation.}
The residual column at the selected pivot column $j$ is computed as:
\[
c = A(:,j) - U(:,1:k-1) V(j,1:k-1)^T,
\]
where $c \in \mathbb{H}^{m \times 1}$. This column represents the error in approximating the $j$-th column of $A$.

\subparagraph{Update and Normalization.}
The new rank-one contribution is constructed as:
\[
U(:,k) = c, \quad V(:,k) = \alpha^{-1} r^T,
\]
where $\alpha^{-1}$ is the quaternion inverse of $\alpha$. This normalization is critical because it enforces the condition $U(i,k) = c(i) = 1$ after scaling, following from the fact that $r(j) = \alpha$.

\subparagraph{Next Row Selection.}
The next pivot row $i$ is selected from the unselected rows based on the residual column:
\[
i = \arg\max_{i \notin \text{rows}} |c(i)|.
\]
This adaptive row selection strategy, coupled with the column selection approach, ensures that the algorithm systematically covers the most significant residual entries. Both the selected row and column are marked as used.

\paragraph{Post-Processing.}
% Upon termination, the algorithm removes unused columns from the factors:
% \[
% U = U(:,1:k_{\text{actual}}), \quad V = V(:,1:k_{\text{actual}}),
% \]
% where $k_{\text{actual}}$ is the actual rank achieved. 
The relative approximation error is computed as:
\[
err = \frac{\|A - U V^T\|_F}{\|A\|_F}.
\]
If no rank was achieved ($k_{\text{actual}} = 0$), the error is set to $err = 1$.

\paragraph{Termination Criteria.}
The algorithm employs multiple stopping criteria to ensure robustness and efficiency:
\begin{enumerate}
    \item \textbf{Maximum rank reached:} $k > r_{\max}$.
    \item \textbf{Tolerance satisfied:} $|r(j)| \leq \varepsilon \cdot A_{\text{norm}}$.
    %\item \textbf{Near-zero pivot:} $|\alpha| < 10^{-15}$.
    \item \textbf{No unselected rows or columns remaining.}
\end{enumerate}

This comprehensive set of stopping criteria ensures that the algorithm terminates gracefully under various conditions, including rank-deficient matrices, ill-conditioned pivots, or when the desired accuracy is achieved.

The native quaternion ACA algorithm preserves several important properties of classical ACA:
\begin{enumerate}
    \item \textbf{Quasi-optimality:} The approximation error is bounded by a constant multiple of the best rank-$k$ quaternion approximation error.
    \item \textbf{Computational efficiency:} The complexity per iteration is $\mathcal{O}(k(m+n))$ quaternion operations, making it competitive with its real counterpart.
    \item \textbf{Memory efficiency:} Only the selected rows and columns of $A$ are accessed, avoiding the need to store the full matrix explicitly.
    \item \textbf{Non-commutative algebra:} The algorithm correctly handles the non-commutativity of quaternion multiplication through the use of  transpose and proper multiplication order.
\end{enumerate}

The complete implementation of QACA is provided in the appendix %\url{https://github.com/SalmanAhmadi-Asl/Quaternion-ACA.git}, 
demonstrating how these algorithmic principles are realized in practice using the QTFM toolbox for MATLAB. The codes are presented in the appendix.

Algorithm \ref{alg:quaternion_aca} has a parameter called maximum rank, which must be provided to the algorithm as an input. However, estimating this parameter in real scenarios is difficult. To make the algorithm fully adaptive and eliminate the need for this parameter, Algorithm \ref{alg:quaternion_aca} can be modified into Algorithm \ref{alg:qaca_auto}, in which the algorithm automatically determines the rank for a given prescribed approximation bound $\epsilon$. This algorithm was initially proposed in \cite{Savostyanov2006} and \cite{Tyrtyshnikov2000} for low-rank approximation of real-valued matrices.

\begin{algorithm}
\caption{Native Quaternion ACA with Automatic Rank Detection}\label{alg:qaca_auto}
\begin{algorithmic}[1]
\Input Matrix \(A = [A(i, j)]\in\mathbb{H}^{m\times  n}\), accuracy threshold \(\varepsilon\).
\Output Rank-\(r\) matrix \(\widetilde{A} = U V^T = \sum_{k=1}^r u_k v_k^T\) such that \(\|A - \widetilde{A}\|_F \le \varepsilon \|A\|_F\).
\Initialization \(U = 0\), \(V = 0\), \(\widetilde{A} = 0\); \(\text{nrm} = 0\); \(\mathcal{I} = \mathcal{J} = \emptyset\); \(k = 1\), \(j_1 = \text{randi}(q)\)
\State \(u_k := (A - \widetilde{A})(:, j_k) = A(:, j_k) - U V(j_k, :)^T\); \(i_k := \max(\text{abs}(u_k))\)
\State \textbf{if} \(u_k(i_k) = 0\) \textbf{goto} step 6
\State \(\hat{v}_k^T := (A - \widetilde{A})(i_k, :) = A(i_k, :) - U(i_k, :) V^T\); \(v_k := \hat{v}_k / \hat{v}_k(j_k)\); \(j_{k+1} := \max_{j \neq j_k}(\text{abs}(v_k))\)
\State \(\text{nrm}^2 := \text{nrm}^2 + 2(U^T u_k, V^T v_k) + \|u_k\|_2^2 \|v_k\|_2^2\); \(\text{err}^2 := (\min(p, q) - k) \|u_k\|_2^2 \|v_k\|_2^2\)
\State \(\widetilde{A} := \widetilde{A} + u_k v_k^T = [U u_k] [V v_k]^T\); \(U := [U u_k]\), \(V := [V v_k]\); \(\mathcal{I} := \mathcal{I} \cup \{i_k\}\), \(\mathcal{J} := \mathcal{J} \cup \{j_k\}\)
\State \{NB: Now \(\widetilde{A}\) interpolates \(A\) on set of rows \((\mathcal{I}, :)\) and set of columns \((:, \mathcal{J})\)\}
\State \textbf{if} \(\text{err} \le \varepsilon \text{nrm}\) \textbf{then}
    \State \(\mathcal{S} = \{(i_s, j_s) := (\text{randi}(p), \text{randi}(q))\}\), \(\#\mathcal{S} \sim m + n\)
    \State \(\text{err}_\mathcal{S}^2 := \sum_{\mathcal{S}} |(A - \widetilde{A})(i_s, j_s)|^2\), \(\text{nrm}_\mathcal{S}^2 := \sum_{\mathcal{S}} |A(i_s, j_s)|^2\)
    \State \((i_{k+1}, j_{k+1}) := \max_{\mathcal{S}}(\text{abs}(A - \widetilde{A}))\)
    \State \textbf{if} \(\text{err}_\mathcal{S} \le \varepsilon \text{nrm}_\mathcal{S}\) \textbf{then return} \(\widetilde{A} = U V^T\) of rank \(r := k\).
\State \textbf{end if}
\State \(k := k + 1\), \textbf{goto} step 1
\end{algorithmic}
\end{algorithm}

When implementing ACA in quaternion arithmetic, {\it every multiplication must respect left-to-right order}. The update
\[
V_k \;\longleftarrow\; \alpha^{-1} \, r^T
\]
is not optional; it is mathematically mandated by the non-commutative structure of $\mathbb{H}$. Replacing it with $V_k \;\longleftarrow\; r^T \alpha^{-1}$ destroys the algebraic consistency of the rank-1 update and invalidates the algorithm's error guarantees. To be more precise, let us explain it in more details. To do so, we first discuss the rank-1 update in the ACA algorithm. Given a matrix \(A\in\mathbb{H}^{m\times n}\), let the selected row and column indices be \(i_k\) and \(j_k\), respectively. For the rank-1 update
\begin{equation}\label{for_update}
R=A- A(:,j_k)\frac{1}{\alpha} A(i_k,:),
\end{equation}
where \(\alpha=A(i_k,j_k)\) is the pivot, we can easily see that
\begin{align}
R(i_k,:)&=A(i_k,:)-A(i_k,j_k)\frac{1}{\alpha} A(i_k,:)=0,\\
R(:,j_k)&=A(:,j_k)-A(:,j_k)\frac{1}{\alpha} A(i_k,j_k)=0.
\end{align}
This means that such a rank-1 update simultaneously zeros out a row and a column of $R$. However, due to the non-commutativity of quaternions, the following rank-1 updates
\begin{equation}\label{defla_2}
R=A- A(:,j_k)A(i_k,:)\frac{1}{\alpha},
\end{equation}
and
\begin{equation}\label{defla_3}
R=A-\frac{1}{\alpha}A(:,j_k)A(i_k,:),
\end{equation}
are not the same as \eqref{for_update}. In \eqref{defla_2}, only the \(j_k\)-th column will be zero, while in \eqref{defla_3}, only the \(i_k\)-th row will be zero. Thus, unlike real or complex matrices, the way the factors are scaled in the QACA method matters.

\begin{remark}
Please note that normalization can be performed using the factor \(U\), but in this case, the multiplication must be applied from the right. More precisely,
\[
U_k \longleftarrow c\,\alpha^{-1}.
\]
As discussed above, if instead we use  
\[
U_k \longleftarrow \alpha^{-1}\,c,
\]
then at each iteration only one row becomes zero, not a column. Similarly, using  
\[
V_k \longleftarrow r^T \, \alpha^{-1}
\]
zeros out only one column, and not  a row. However, when we apply either  
\[
V_k \longleftarrow \alpha^{-1}\,r^T
\]
or  
\[
U_k \longleftarrow c\,\alpha^{-1},
\]
both one row and one column are zeroed simultaneously. 
\end{remark}

To better understand the algorithm, in the following we provide one example for a matrix of size $3\times 3$ and right quaternion rank 2. Let
\[
a_1 = \begin{bmatrix}1\\ \mathbf{i}\\ 0\end{bmatrix},\quad
a_2 = \begin{bmatrix}0\\ \mathbf{j}\\ 1\end{bmatrix}
\in \mathbb{H}^{3\times 1},
\qquad
b_1 = \begin{bmatrix}1\\ 0\\ \mathbf{k}\end{bmatrix},\quad
b_2 = \begin{bmatrix}0\\ 1\\ \mathbf{i}\end{bmatrix}
\in \mathbb{H}^{3\times 1}.
\]
Define
\[
A = a_1 b_1^{T} + a_2 b_2^{T} =\begin{bmatrix}
1 & 0 & \mathbf{k}\\
\mathbf{i} & \mathbf{j} & -\mathbf{j}-\mathbf{k}\\
0 & 1 & \mathbf{i}
\end{bmatrix}\in \mathbb{H}^{3\times 3},
\]
so , \(A\) is exactly rank \(2\) by construction. Algorithm \ref{alg:quaternion_aca} select the first row as it is the first nonzero row, so $i_1=1$ and 
\[
V(:,1)^T=\begin{bmatrix}
    1& 0& \mathbf{k}
\end{bmatrix}.
\]
Then, the maximum of the elements of this row in modulus is selected to be the selected column. Since $|1|=|\mathbf{k}|=1$, the algorithm selects the first column, so $j_1=1$ and $U(:,1)=\begin{bmatrix}
    1\\\mathbf{i}\\0
\end{bmatrix}$, and the pivot is $A(i_1,j_1)=1$. So, the rank-1 deflation becomes

\begin{eqnarray}
R^{(2)}&=&R^{(1)}-U(:,1)\frac{1}{A(i_1,j_1)}V(:,1)^T=
\begin{bmatrix}
1 & 0 & k\\
\mathbf{i} & \mathbf{j} & -\mathbf{j}-\mathbf{k}\\
0 & 1 & \mathbf{i}
\end{bmatrix}-\begin{bmatrix}
    1\\\mathbf{i}\\0
\end{bmatrix}\frac{1}{1}\begin{bmatrix}
    1& 0& \mathbf{k}
\end{bmatrix},\\
&=&\begin{bmatrix}
1 & 0 & k\\
\mathbf{i} & \mathbf{j} & -\mathbf{j}-\mathbf{k}\\
0 & 1 & \mathbf{i}
\end{bmatrix}-\begin{bmatrix}
1 & 0 & k\\
\mathbf{i} & 0 & -\mathbf{j}\\
0 & 0 & 0
\end{bmatrix}=
\begin{bmatrix}
0 & 0 & 0\\
0 & \mathbf{j} & -\mathbf{k}\\
0 & 1 & \mathbf{i}
\end{bmatrix},
\end{eqnarray}
where $A=R^{(1)}$. As can be seen, the first row and the first column become zero, or more precisely the approximation $A\approx U(:,1)V(:,1)^T$, interpolates the matrix $A$, in the first row and the first column.

In the second stage, we select another row index by searching for the maximum of an element of $V(:,1)^T$ in modulus, which is $i_2=2$, because $i_2$ cannot be 1 as was selected in the first stage and $|V(2,1)^T|=|\mathbf{i}|=1$ and $|V(3,1)^T|=|\mathbf{i}|=0$. To select the second column index, now $k=1$ and the maximum in the module of the residual
\[
r(i_2)=A(i_2,:)-U(i_2,1)V(:,1)^T,
\]
or
\[
r(2)=A(2,:)-U(2,1)V(:,1)^T=\begin{bmatrix}
\mathbf{i}&\mathbf{j}&-\mathbf{j}-\mathbf{k}
\end{bmatrix}-(\mathbf{i})
\begin{bmatrix}
1&0&\mathbf{k}
\end{bmatrix}=\begin{bmatrix}
0&\mathbf{j}&-\mathbf{k}
\end{bmatrix}.
\]
is computed. Here $j_2=2$ because $j_2$ can be 1 as was already selected and $|\mathbf{j}|=|\mathbf{k}|=1$. So, we have 
\[
U(:,i_2)=\begin{bmatrix}
0\\\mathbf{j}\\1    
\end{bmatrix},\quad  V(:,j_2)^T=\begin{bmatrix}
0&\mathbf{j}&-\mathbf{k}   
\end{bmatrix},
\]
with the second pivot $A(i_2,j_2)=\mathbf{j}$. The second deflation process will be 
\begin{eqnarray}
R^{(3)}&=&R^{(2)}-U(:,i_2)\frac{1}{A(i_2,j_2)}V(:,j_2)^T=\begin{bmatrix}
0 & 0 & 0\\
0 & \mathbf{j} & -\mathbf{k}\\
0 & 1 & \mathbf{i}
\end{bmatrix}-\begin{bmatrix}
0\\\mathbf{j}\\1    
\end{bmatrix}
\frac{1}{\mathbf{j}}
\begin{bmatrix}
0&\mathbf{j}&-\mathbf{k}   
\end{bmatrix},\\
&=&\begin{bmatrix}
0 & 0 & 0\\
0 & \mathbf{j} & -\mathbf{k}\\
0 & 1 & \mathbf{i}
\end{bmatrix}-
\begin{bmatrix}
0 & 0 & 0\\
0 & \mathbf{j} & -\mathbf{k}\\
0 & 1 & \mathbf{i}
\end{bmatrix}=\begin{bmatrix}
0&0&0\\
0&0&0\\
0&0&0
\end{bmatrix}.
\end{eqnarray}
As expected, the new approximation $A\approx U(:,1:2)V(:,1:2)^T$, became exact as the original matrix has rank 2 and it also interpolated the original matrix in the row indices $i_1=1,\,i_2=2$ and column indices $j_1=1,\,j_2=1$. 

In the next section, we provide more theoretical aspects of the proposed adaptive cross approximation algorithm in terms of rank reduction property and approximation accuracy.

\section{Theoretical Analysis}\label{sec:theory}
In this section, we present the convergence and error bounds for the proposed algorithm.

\begin{theorem}[Deflation Property of QACA]
\label{thm:qaca-deflation}
Let $A\in\mathbb{H}^{m\times n}$
and suppose that $\operatorname{rank}_{\mathbb{H}}(A)=r.
$
Let
$R^{(0)}=A.
$
Suppose that at the $k$th successful step of QACA a nonzero pivot
$\alpha_k=R^{(k)}_{i_kj_k}\neq0
$
is selected and the residual is updated according to
\[
R^{(k+1)}
=
R^{(k)}
-
R^{(k)}_{:,j_k}
\alpha_k^{-1}
R^{(k)}_{i_k,:}.
\]
Then
\[
{
\operatorname{rank}_{\mathbb{H}}
\left(R^{(k+1)}\right)
=
\operatorname{rank}_{\mathbb{H}}
\left(R^{(k)}\right)-1.
}
\]
Consequently, if QACA performs $r$ successful nonzero-pivot steps,
then $R^{(r)}=0,$
and hence $A=U_rV_r^T.$ Here $\mathbb{H}^m$ is regarded as a right quaternion vector space.
\end{theorem}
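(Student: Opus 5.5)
The plan is to work with the right column spaces and apply the rank--nullity theorem (Lemma~\ref{lem:quaternion-rank-nullity}) to $R^{(k)}$ and $R^{(k+1)}$, viewed as right $\mathbb{H}$-linear maps $\mathbb{H}^n\to\mathbb{H}^m$, $x\mapsto Rx$. Write $R=R^{(k)}$, $c=R_{:,j_k}$, $\rho=R_{i_k,:}$ and $\alpha=\alpha_k=\rho_{j_k}=c_{i_k}$, so that $R^{(k+1)}=R-c\,\alpha^{-1}\rho$. For $x\in\mathbb{H}^n$ we have $R^{(k+1)}x=Rx-c\,\alpha^{-1}(\rho x)$, where $\rho x\in\mathbb{H}$. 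Associativity makes this map right $\mathbb{H}$-linear, so Lemma~\ref{lem:quaternion-rank-nullity} applies. It therefore suffices to show
\[
\dim_{\mathbb H}\ker R^{(k+1)}=\dim_{\mathbb H}\ker R+1 .
\]

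\textbf{Step 1: $\ker R\subseteq\ker R^{(k+1)}$.} If $Rx=0$, then in particular $\rho x=(Rx)_{i_k}=0$. Hence $R^{(k+1)}x=0-c\,\alpha^{-1}\cdot 0=0$.

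\textbf{Step 2: a new kernel vector.} Take $e_{j_k}$. Then $R^{(k+1)}e_{j_k}=c-c\,\alpha^{-1}\alpha=0$; this is the column-zeroing identity already observed after \eqref{for_update}. On the other hand, $Re_{j_k}=c\neq0$ because $c_{i_k}=\alpha\neq0$. So $e_{j_k}\in\ker R^{(k+1)}\setminus\ker R$.

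\textbf{Step 3: no more than one new dimension.} Define the right-linear functional $\phi(x)=\rho x=(Rx)_{i_k}$. I claim that $\ker R^{(k+1)}\cap\ker\phi=\ker R$. If $R^{(k+1)}x=0$ and $\rho x=0$, then $Rx=R^{(k+1)}x+c\,\alpha^{-1}\rho x=0$. Conversely, Step 1 gives the reverse inclusion. Since $\phi$ is a right-linear map into $\mathbb{H}$, which has dimension $1$, applying rank--nullity to $\phi$ restricted to $\ker R^{(k+1)}$ gives
\[
\dim\ker R^{(k+1)}\le\dim\ker R+1 .
\]
Together with Steps 1--2 this yields equality, and rank--nullity then gives $\operatorname{rank}_{\mathbb H}R^{(k+1)}=\operatorname{rank}_{\mathbb H}R-1$.

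\textbf{Step 4: the consequence.} Induction gives $\operatorname{rank}_{\mathbb H}R^{(r)}=r-r=0$, hence $R^{(r)}=0$. Note that a zero matrix is the only rank-zero matrix, since its column space is $\{0\}$. Telescoping gives
\[
A=\sum_{k=0}^{r-1}R^{(k)}_{:,j_k}\alpha_k^{-1}R^{(k)}_{i_k,:}.
\]
With $U(:,k)=R^{(k)}_{:,j_k}$ and $V(:,k)^T=\alpha_k^{-1}R^{(k)}_{i_k,:}$, which is exactly the normalization in Algorithm~\ref{alg:quaternion_aca}, this is $A=U_rV_r^T$.

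To identify the algorithm's residual row and column with the rows and columns of $R^{(k)}$, one checks inductively that $A(i,:)-U(i,1:k)V(:,1:k)^T=R^{(k)}(i,:)$, and similarly for columns.

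The step I expect to be the main obstacle is Step 3, specifically keeping the sides straight. The functional $\phi$ must be right-linear, which holds because $\rho(xq)=(\rho x)q$. The scalar $\alpha^{-1}$ must sit between $c$ and $\rho$ so that both Step 2 and the recovery of $Rx$ from $R^{(k+1)}x$ work. Placing $\alpha^{-1}$ on either outer side, as in \eqref{defla_2} or \eqref{defla_3}, would break Step 2 or the right-linearity.
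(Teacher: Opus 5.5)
Your proof is correct, but it works on the opposite side from the paper's.

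The paper writes $R^{(k+1)}=(I_m-ue_{i_k}^T)R$ with $u=c\alpha^{-1}$. It shows that the projector $P=I_m-ue_{i_k}^T$ has $\ker P=\operatorname{span}_{\mathbb H}^{R}\{u\}$, notes that $u\in\mathcal{C}_R(R)$, and applies rank--nullity to $P$ restricted to $\mathcal{C}_R(R)$. It thus compares column spaces inside $\mathbb{H}^m$ directly, which matches the definition of rank. As a by-product it gets the nesting $\mathcal{C}_R(R^{(k+1)})\subseteq\mathcal{C}_R(R)$.

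You instead work in the domain $\mathbb{H}^n$. You effectively show $\ker R^{(k+1)}=\ker R\oplus e_{j_k}\mathbb{H}$, using only the column-zeroing identity and the fact that the correction term factors through the scalar $\rho x$. You then convert kernel dimensions into ranks by applying rank--nullity to $R$ and to $R^{(k+1)}$. Implicitly, you are analysing the right factorization $R^{(k+1)}=R\,(I_n-e_{j_k}\alpha^{-1}\rho)$.

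Your route yields nested null spaces that accumulate $e_{j_1},e_{j_2},\ldots$, so the column half of the interpolation property is built into the rank argument. You also make explicit the telescoping identity and its match with the normalization $V(:,k)=\alpha^{-1}r^T$; the paper only asserts $A=U_rV_r^T$.

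One small correction to your closing remark. For \eqref{defla_2}, the map $x\mapsto(R-c\rho\alpha^{-1})x$ is still right-linear, since it is given by a matrix. What actually fails is your Step 1: the correction term becomes $c\,\rho(\alpha^{-1}x)$, which need not vanish when $\rho x=0$, so $\ker R\subseteq\ker R^{(k+1)}$ is lost.
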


\begin{proof}
Fix a successful step and, for simplicity of notation, write
\[
R=R^{(k)},\qquad
\alpha=R_{ij}\neq0,
\]
where $i=i_k,\,j=j_k.$
Define
\[
c=R_{:j}\in\mathbb{H}^{m\times1},
\qquad
r=R_{i,:}\in\mathbb{H}^{1\times n},
\]
and $u=c\alpha^{-1}.$
Since $u_i=c_i\alpha^{-1}
=
R_{ij}R_{ij}^{-1}
=
1,$
we have $e_i^Tu=1.$
The residual update becomes
\[
\begin{aligned}
R^{(k+1)}
&=
R-c\alpha^{-1}r\\
&=
R-ue_i^TR\\
&=
(I_m-ue_i^T)R.
\end{aligned}
\]
Define $P=I_m-ue_i^T.$ Thus $
R^{(k+1)}=PR.$ We regard $\mathbb{H}^m$ as a right vector space over
$\mathbb{H}$. The matrix $P$ defines a right-linear transformation
because, for every $x\in\mathbb{H}^m$ and $q\in\mathbb{H}$,
\[
P(xq)
=
(I_m-ue_i^T)(xq)
=
Px\,q.
\]
We next determine the kernel of $P$. First,
\[
Pu
=
u-ue_i^Tu
=
u-u
=
0.
\]
Hence $\operatorname{span}_{\mathbb H}^{R}\{u\}
\subseteq
\ker_{\mathbb H}(P).$ Conversely, suppose that $Px=0,$
then $x-ue_i^Tx=0,$
and therefore $x=u(e_i^Tx).$
Since $e_i^Tx\in\mathbb H$, this shows that
\[
x\in
\operatorname{span}_{\mathbb H}^{R}\{u\}.
\]
Consequently,
\[
{
\ker_{\mathbb H}(P)
=
\operatorname{span}_{\mathbb H}^{R}\{u\}.
}
\]
Since $u\neq0$ because $u_i=1$, we obtain $\dim_{\mathbb H}\ker(P)=1.$
Now observe that
\[
u=c\alpha^{-1}
=
R_{:j}\alpha^{-1}.
\]
Since $R_{:j}\in\mathcal{C}_{R}(R)$ and the column space is a right
quaternion subspace, $u\in\mathcal{C}_{R}(R).$
Therefore, $\operatorname{span}_{\mathbb H}^{R}\{u\}
\subseteq
\mathcal{C}_{R}(R).$
Hence
\[
\mathcal{C}(R)\cap\ker(P)
=
\operatorname{span}_{\mathbb H}^{R}\{u\},
\]
and consequently
\[
\dim_{\mathbb H}^{R}
\left(
\mathcal{C}_{R}(R)\cap\ker(P)
\right)
=1.
\]
Since $R^{(k+1)}=PR,$ we have $\mathcal{C}_{R}(R^{(k+1)})
=
P\mathcal{C}_{R}(R).$
Consider the restriction
\[
P|_{\mathcal{C}_{R}(R)}
:
\mathcal{C}_R(R)\longrightarrow
\mathcal{C}_{R}(R^{(k+1)}).
\]
Its kernel is
\[
\ker\left(P|_{\mathcal{C}_{R}(R)}\right)
=
\mathcal{C}_{R}(R)\cap\ker(P),
\]
which has right quaternion dimension one. By the rank--nullity theorem for finite-dimensional right vector
spaces over a division ring,
\[
\dim_{\mathbb H}\mathcal{C}_{R}(R)
=
\dim_{\mathbb H}
\ker\left(P|_{\mathcal{C}_{R}(R)}\right)
+
\dim_{\mathbb H}
P\mathcal{C}_{R}(R).
\]
Therefore,
\[
\begin{aligned}
\operatorname{rank}_{\mathbb H}(R^{(k+1)})
&=
\dim_{\mathbb H}\mathcal{C}_{R}(R^{(k+1)})\\
&=
\dim_{\mathbb H}\mathcal{C}_{R}(R)-1\\
&=
\operatorname{rank}_{\mathbb H}(R)-1.
\end{aligned}
\]
Thus
\[
{
\operatorname{rank}_{\mathbb H}(R^{(k+1)})
=
\operatorname{rank}_{\mathbb H}(R^{(k)})-1.
}
\]
Starting from
\[
\operatorname{rank}_{\mathbb H}(R^{(0)})=r,
\]
induction gives
\[
\operatorname{rank}_{\mathbb H}(R^{(k)})=r-k,
\qquad
k=0,2,\ldots,r.
\]
In particular,
\[
\operatorname{rank}_{\mathbb H}(R^{(r)})=0.
\]
The only quaternion matrix of rank zero is the zero matrix, and hence
\[
R^{(r)}=0.
\]
Therefore
\[
A=U_rV_r^T.
\]
This implies that the native quaternion ACA algorithm recovers an exact factorization of any rank-$r$ matrix $A \in \mathbb{H}^{m \times n}$ after at most $r$ iterations, assuming exact arithmetic and nonzero pivots.
\end{proof}

\begin{theorem}[CUR Representation of the QACA Approximation]
\label{thm:qaca-cur-equivalence}
Let $A\in\mathbb{H}^{m\times n},
$ and suppose that the QACA
algorithm performs $k$ successful steps with nonzero pivots
\[
\alpha_\ell
=
R^{(\ell-1)}_{i_\ell j_\ell}
\neq 0,
\qquad
\ell=1,2,\ldots,k.
\]
Let
\[
I_k=\{i_1,2,\ldots,i_k\},
\qquad
J_k=\{j_1,2,\ldots,j_k\}
\]
denote the selected row and column indices, respectively. Define
the selected column matrix, selected row matrix, and selected cross
matrix by
\[
C_k=A(:,J_k)\in\mathbb{H}^{m\times k},
\]
\[
R_k=A(I_k,:)\in\mathbb{H}^{k\times n},
\]
and
\[
W_k=A(I_k,J_k)\in\mathbb{H}^{k\times k}.
\]
Assume that $W_k$ is nonsingular. Then the rank-$k$ approximation generated by QACA can be written as
\[
{
A_k=U_kV_k^T
=
C_kW_k^{-1}R_k.
}
\]
Consequently, QACA produces an implicit quaternion CUR
(or skeleton) decomposition
\[
{
A_k=C_kM_kR_k,
\qquad
M_k=W_k^{-1}
=
A(I_k,J_k)^{-1}.
}
\]
In particular, the middle factor in the CUR representation is the
inverse of the selected cross matrix. It should not be confused with
the QACA factor $U_k$\footnote{In this theorem, \(R^{(l)}\) (with a superscript) denotes a residual matrix, whereas \(R_k\) (with a subscript) denotes the factor containing the selected rows ($k$ rows) in the CUR approximation. Please be careful to distinguish between the two notations.}. 
\end{theorem}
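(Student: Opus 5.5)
We prove the identity $A_k=U_kV_k^T=C_kW_k^{-1}R_k$ by induction on $k$, working with the residual recursion from Theorem~\ref{thm:qaca-deflation}:
\[
R^{(\ell)}=R^{(\ell-1)}-R^{(\ell-1)}_{:,j_\ell}\,\alpha_\ell^{-1}\,R^{(\ell-1)}_{i_\ell,:},\qquad R^{(0)}=A,
\]
so that $A_k=A-R^{(k)}$.

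The first step is to show that every residual is a Schur-complement-type expression in the original data. Precisely, for each $\ell$,
\[
R^{(\ell)}=A-C_\ell W_\ell^{-1}R_\ell .
\]
We set this up inductively. A companion observation, also proved by induction, is that $R^{(\ell-1)}_{:,j_\ell}$ is a right combination of the columns $A(:,J_\ell)$, and $R^{(\ell-1)}_{i_\ell,:}$ is a left combination of the rows $A(I_\ell,:)$. This holds because each deflation $(I-ue_i^T)R$ from the previous proof multiplies on the left and, symmetrically, acts on rows by right multiplication. Hence $A_k=C_kX_kR_k$ for some $X_k\in\mathbb{H}^{k\times k}$. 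The order of factors must be kept: column factors always stand on the left and row factors on the right, which is exactly what the normalization $c\,\alpha^{-1}\,r$ preserves.

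The second step identifies $X_k$ through interpolation. From the deflation property, $R^{(\ell)}$ vanishes on row $i_\ell$ and column $j_\ell$. Further deflations only subtract multiples of later columns and rows restricted to zero entries, so these zeros persist. Therefore $R^{(k)}(I_k,:)=0$ and $R^{(k)}(:,J_k)=0$, that is, $A_k$ agrees with $A$ on the cross. Restricting $A_k=C_kX_kR_k$ to the rows $I_k$ and columns $J_k$ gives
\[
W_k=A(I_k,J_k)=W_kX_kW_k .
\]
Since $W_k$ is nonsingular, multiplying by $W_k^{-1}$ on the left and on the right yields $X_k=W_k^{-1}$. The argument uses only associativity and two-sided inverses, both of which hold over $\mathbb{H}$. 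One should also note that nonsingularity of $W_k$ follows from nonzero pivots: $W_k$ admits an LU-type factorization whose diagonal consists of the pivots $\alpha_\ell$. The hypothesis nonetheless states this directly.

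The main obstacle is the persistence of the zero rows and columns under later deflations. Row $i_\ell$ of $R^{(\ell')}$ for $\ell'>\ell$ changes by $R^{(\ell'-1)}_{i_\ell,j_{\ell'}}\alpha_{\ell'}^{-1}R^{(\ell'-1)}_{i_{\ell'},:}$, and its leading factor is zero by the induction hypothesis. The same argument applies to columns. This step needs care about which side the scalar multiplies, but it involves no commutativity.
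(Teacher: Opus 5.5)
Your proof is correct, but it takes a different route from the paper's.

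\textbf{The paper's route.} It argues by direct algebraic induction. It writes $W_{k+1}$ in bordered form and identifies the new pivot $\alpha_{k+1}=a-qW_k^{-1}p$ as the Schur complement of $W_k$. It then inserts the quaternion block-inverse formula \eqref{EQ:5} and expands $C_{k+1}W_{k+1}^{-1}R_{k+1}$, obtaining exactly $C_kW_k^{-1}R_k+c_{k+1}\alpha_{k+1}^{-1}r_{k+1}$.

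\textbf{Your route.} You use a structural uniqueness argument. The residual columns lie in the right span of $A(:,J_k)$ and the residual rows in the left span of $A(I_k,:)$, so $A_k=C_kX_kR_k$ for some $X_k$. Persistence of the zero rows and columns makes $A_k$ interpolate $A$ on the cross, which forces $W_kX_kW_k=W_k$ and hence $X_k=W_k^{-1}$.

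\textbf{What each buys.} Your argument avoids block inversion over $\mathbb{H}$ entirely. It needs invertibility only of the final $W_k$, whereas the paper's induction passes through every intermediate $W_\ell$. You also obtain the interpolation property (Corollary~\ref{cor:interpolation}) as an input, where the paper derives it as a consequence. In return, the paper's Schur-complement identification shows that nonzero pivots alone make every $W_\ell$ nonsingular, so that hypothesis is redundant. It also produces the explicit recursion \eqref{EQ:5} for $W_{k+1}^{-1}$, on which Theorem~\ref{thm:frob-recursion} and the subsequent growth-factor bounds rely; your argument does not yield that recursion.

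\textbf{A presentational point.} Your announced first step claims $R^{(\ell)}=A-C_\ell W_\ell^{-1}R_\ell$ for every $\ell$. That presupposes each intermediate $W_\ell$ is invertible, which is not among the hypotheses. Since your argument uses invertibility only at $\ell=k$, either state the identity just for $k$, or justify the intermediate invertibility. Your LU/Schur remark would do this if proved. So would applying Theorem~\ref{thm:qaca-deflation} to the restricted process on $W_\ell$: the pivots are nonzero, so $\ell$ deflation steps force $\operatorname{rank}_{\mathbb H}(W_\ell)=\ell$.
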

 
\begin{proof}
We prove the result by induction on the number of successful QACA
steps. For convenience, denote by $A_\ell=U_\ell V_\ell^T$
the approximation after $\ell$ successful QACA steps, and let
\[
R^{(\ell)}=A-A_\ell
\]
be the corresponding residual. At step $\ell$, QACA selects a pivot $(i_\ell,j_\ell)$
with $
\alpha_\ell
=
R^{(\ell-1)}_{i_\ell j_\ell}
\neq0.$ Define
\[
c_\ell=R^{(\ell-1)}_{:,j_\ell},
\qquad
r_\ell=R^{(\ell-1)}_{i_\ell,:}.
\]
The QACA update is
\begin{equation}\label{EQ:1}
A_\ell
=
A_{\ell-1}
+
c_\ell\alpha_\ell^{-1}r_\ell.
\end{equation}
The order of the quaternion factors in \eqref{EQ:1} is essential.

\medskip
\noindent
\textbf{Base case: $k=1$.} For the first selected row and column, we have
\[
C_1=A(:,j_1)=c_1,\quad 
R_1=A(i_1,:)=r_1,
\]
and
\[
W_1=A(i_1,j_1)=\alpha_1.
\]
Hence
\[
C_1W_1^{-1}R_1
=
c_1\alpha_1^{-1}r_1.
\]
By the first QACA update, $A_1=c_1\alpha_1^{-1}r_1.$ Therefore ${
A_1=C_1W_1^{-1}R_1.
}$

\medskip
\noindent
\textbf{Induction hypothesis.} Assume that after $k$ successful steps,
\begin{equation}\label{EQ:2}
{
A_k=C_kW_k^{-1}R_k,
}
\end{equation}
where
\[
C_k=A(:,J_k),\qquad
R_k=A(I_k,:),\qquad
W_k=A(I_k,J_k).
\]
We prove that
\[
A_{k+1}
=
C_{k+1}W_{k+1}^{-1}R_{k+1}.
\]
Let the new row and column selected by QACA be
\[
i=i_{k+1},
\qquad
j=j_{k+1}.
\]
For notational convenience, define
\[
p=A(I_k,j),
\qquad
q=A(i,J_k),
\qquad
a=A(i,j).
\]
Then the enlarged cross matrix is
\[
W_{k+1}
=
\begin{bmatrix}
W_k & p\\
q & a
\end{bmatrix}.
\]
The residual column selected by QACA is
\[
c_{k+1}
=
A(:,j)-A_k(:,j).
\]
Using the induction hypothesis \eqref{EQ:2},
\begin{equation}\label{EQ:3}
c_{k+1}
=
A(:,j)-C_kW_k^{-1}p.
\end{equation}
Similarly, the residual row is
\[
r_{k+1}
=
A(i,:)-A_k(i,:),
\]
and therefore
\begin{equation}\label{EQ:4}
r_{k+1}
=
A(i,:)-qW_k^{-1}R_k.
\end{equation}
The new pivot is $\alpha_{k+1}
=
R^{(k)}_{ij}.$
Using \eqref{EQ:3}, evaluated at row $i$,
\[
\alpha_{k+1}
=
a-qW_k^{-1}p.
\]
Thus $\alpha_{k+1}$ is precisely the Schur complement of $W_k$
in $W_{k+1}$. Since $W_k$ and $\alpha_{k+1}$ are nonsingular, the block inverse
formula gives
\begin{equation}\label{EQ:5}
W_{k+1}^{-1}
=
\begin{bmatrix}
W_k^{-1}
+
W_k^{-1}p\alpha_{k+1}^{-1}qW_k^{-1}
&
-W_k^{-1}p\alpha_{k+1}^{-1}
\\[2mm]
-\alpha_{k+1}^{-1}qW_k^{-1}
&
\alpha_{k+1}^{-1}
\end{bmatrix}.
\end{equation}
The enlarged selected column and row matrices are
\[
C_{k+1}
=
\begin{bmatrix}
C_k & A(:,j)
\end{bmatrix},
\]
and
\[
R_{k+1}
=
\begin{bmatrix}
R_k\\
A(i,:)
\end{bmatrix}.
\]
Substituting \eqref{EQ:5} and multiplying while preserving the order of
quaternion multiplication yields
\[
\begin{aligned}
C_{k+1}W_{k+1}^{-1}R_{k+1}
={}&
C_kW_k^{-1}R_k
\\
&+
\left(A(:,j)-C_kW_k^{-1}p\right)
\alpha_{k+1}^{-1}
\left(A(i,:)-qW_k^{-1}R_k\right).
\end{aligned}
\]
Using \eqref{EQ:3} and \eqref{EQ:4}, this becomes
\[
C_{k+1}W_{k+1}^{-1}R_{k+1}
=
C_kW_k^{-1}R_k
+
c_{k+1}\alpha_{k+1}^{-1}r_{k+1}.
\]
By the induction hypothesis,
\[
C_kW_k^{-1}R_k=A_k.
\]
Hence
\[
C_{k+1}W_{k+1}^{-1}R_{k+1}
=
A_k+c_{k+1}\alpha_{k+1}^{-1}r_{k+1}.
\]
By the QACA update \eqref{EQ:1},
\[
A_{k+1}
=
A_k+c_{k+1}\alpha_{k+1}^{-1}r_{k+1}.
\]
Therefore
\[
{
A_{k+1}
=
C_{k+1}W_{k+1}^{-1}R_{k+1}.
}
\]
By induction, for every $k$ for which the selected cross matrix
$W_k$ is nonsingular,
\[
{
U_kV_k^T
=
A_k
=
C_kW_k^{-1}R_k.
}
\]
Defining $M_k=W_k^{-1},$ we obtain the QCUR representation
\[
{
A_k=C_kM_kR_k,
\qquad
M_k=A(I_k,J_k)^{-1}.
}
\]
This completes the proof.
\end{proof}

The theorem immediately yields the following important consequences.

\begin{corollary}[Interpolation Property]
\label{cor:interpolation}
Under the same assumptions, the residual $R^{(k)}$ vanishes on the selected rows and columns:
\[
(R^{(k)})_{\mathcal{I},:} = 0, \qquad (R^{(k)})_{:,\mathcal{J}} = 0.
\]
\end{corollary}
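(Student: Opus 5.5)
We plan to derive the interpolation property directly from the CUR representation of Theorem~\ref{thm:qaca-cur-equivalence}, $A_k = C_k W_k^{-1} R_k$, with $C_k=A(:,J_k)$, $R_k=A(I_k,:)$ and $W_k=A(I_k,J_k)$ nonsingular. Since $R^{(k)} = A - A_k$, it suffices to show that $A_k(I_k,:) = A(I_k,:)$ and $A_k(:,J_k) = A(:,J_k)$. Restricting to rows is left multiplication by a row-selection matrix $E_{I_k}^T$, and restricting to columns is right multiplication by a column-selection matrix $E_{J_k}$. Both are real $0/1$ matrices, so they commute with quaternion scalars. Associativity of quaternion matrix multiplication still holds, so the restriction can be passed inside the product without reordering any quaternion factors.

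\textbf{Rows.} Applying $E_{I_k}^T$ on the left gives
\[
A_k(I_k,:) = C_k(I_k,:)\,W_k^{-1}R_k .
\]
Here $C_k(I_k,:) = A(I_k,J_k) = W_k$. Hence $A_k(I_k,:) = W_kW_k^{-1}R_k = R_k = A(I_k,:)$, so $(R^{(k)})_{\mathcal I,:}=0$.

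\textbf{Columns.} Symmetrically, applying $E_{J_k}$ on the right gives
\[
A_k(:,J_k) = C_kW_k^{-1}R_k(:,J_k) = C_kW_k^{-1}W_k = C_k = A(:,J_k),
\]
so $(R^{(k)})_{:,\mathcal J}=0$.

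The one point needing care is that $W_k^{-1}$ is a genuine two-sided inverse over $\mathbb{H}$, so that both $W_kW_k^{-1}=I$ and $W_k^{-1}W_k=I$ hold. The rows argument uses the first and the columns argument uses the second. For a square matrix over a division ring, a one-sided inverse is automatically two-sided. One way to see this is through the real embedding of Proposition~\ref{thm_REP}: multiplicativity together with $\Phi_{\mathbb R}(I)=I$ transfers the statement to real $4k\times 4k$ matrices, and injectivity brings it back. I expect this to be the only subtle step. The rest is bookkeeping of multiplication order, which the selection matrices never disturb.

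As a consistency check one can also argue inductively from the rank-one updates. The proof of Theorem~\ref{thm:qaca-deflation} shows that each update $R^{(\ell)} = (I-u e_{i_\ell}^T)R^{(\ell-1)}$ zeros row $i_\ell$. Symmetrically, writing it as $R^{(\ell-1)}(I-e_{j_\ell}w)$ with $w=\alpha_\ell^{-1}r_\ell$ zeros column $j_\ell$. Once a row or column is zero it stays zero, because each subsequent update subtracts $c\,\alpha^{-1}r$, where $c$ vanishes on previously zeroed rows and $r$ vanishes on previously zeroed columns. This gives the corollary even without invoking invertibility of $W_k$.
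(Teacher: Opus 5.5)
Your main argument is correct and is essentially the paper's proof. You restrict $A_k=C_kW_k^{-1}R_k$ to the selected rows and columns, use $C_k(I_k,:)=R_k(:,J_k)=W_k$, and cancel with $W_kW_k^{-1}=I$ and $W_k^{-1}W_k=I$, and your real-embedding argument that the inverse is two-sided over $\mathbb{H}$ supplies a point the paper leaves implicit. Your inductive check via $R^{(\ell)}=(I-ue_{i_\ell}^T)R^{(\ell-1)}=R^{(\ell-1)}(I-e_{j_\ell}w)$ is also valid, and it needs only nonzero pivots rather than invertibility of $W_k$.
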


\begin{proof}
From Theorem \ref{thm:qaca-cur-equivalence}, for any $i \in \mathcal{I}$ and any $j$:
\[
(R^{(k)})_{i,j} = A_{i,j} - (A_{:,\mathcal{J}} P^{-1} A_{\mathcal{I},:})_{i,j}.
\]
Considering $P=A_{\mathcal{I},\mathcal{J}}$, we have
\begin{eqnarray}
(R_k)_{\mathcal{I},:} = A_{\mathcal{I},:} - A_{\mathcal{I},\mathcal{J}} A_{\mathcal{I},\mathcal{J}}^{-1} A_{\mathcal{I},:}=0,\\
(R_k)_{:,\mathcal{J}} = A_{:,\mathcal{J}} - A_{:,\mathcal{J}} A_{\mathcal{I},\mathcal{J}}^{-1} A_{\mathcal{I},\mathcal{J}}=0.
\end{eqnarray}

This completes the proof.
\end{proof}

% \begin{corollary}[Exact Error as a Schur Complement]
% \label{cor:qaca-schur-error}
% Under the assumptions of Theorem~\ref{thm:qaca-cur-equivalence},
% after suitable row and column permutations, write
% \[
% A=
% \begin{bmatrix}
% W_k & F_k\\
% G_k & D_k
% \end{bmatrix}.
% \]
% Then
% \[
% A-A_k
% =
% \begin{bmatrix}
% 0&0\\
% 0&D_k-G_kW_k^{-1}F_k
% \end{bmatrix}.
% \]
% Consequently,
% \[
% {
% \|A-A_k\|_F
% =
% \|D_k-G_kW_k^{-1}F_k\|_F,
% }
% \]
% and
% \[
% {
% \|A-A_k\|_2
% =
% \|D_k-G_kW_k^{-1}F_k\|_2.
% }
% \]
% \end{corollary}

% \begin{proof}
% By Theorem~\ref{thm:qaca-cur-equivalence},
% \[
% A_k=C_kW_k^{-1}R_k.
% \]
% After permuting the selected rows and columns to the leading
% positions,
% \[
% C_k=
% \begin{bmatrix}
% W_k\\G_k
% \end{bmatrix},
% \qquad
% R_k=
% \begin{bmatrix}
% W_k&F_k
% \end{bmatrix}.
% \]
% Therefore
% \[
% A_k
% =
% \begin{bmatrix}
% W_k\\G_k
% \end{bmatrix}
% W_k^{-1}
% \begin{bmatrix}
% W_k&F_k
% \end{bmatrix}
% =
% \begin{bmatrix}
% W_k&F_k\\
% G_k&G_kW_k^{-1}F_k
% \end{bmatrix}.
% \]
% Subtracting from $A$ gives
% \[
% A-A_k
% =
% \begin{bmatrix}
% 0&0\\
% 0&D_k-G_kW_k^{-1}F_k
% \end{bmatrix}.
% \]
% The stated norm identities follow immediately.
% \end{proof}

\begin{theorem}[Recursive Frobenius bound for $W_k^{-1}$]
\label{thm:frob-recursion}
Under the assumptions of Theorem~\ref{thm:qaca-cur-equivalence}, define
\[
 s_k=\norm{W_k^{-1}}_{\F}.
\]
Then
\[
\begin{aligned}
s_{k+1}
\le {}&
s_k
+\frac{1}{\abs{\alpha_{k+1}}}
\Big(
s_k^2\norm{p_{k+1}}_2\norm{q_{k+1}}_2
\\
&\hspace{25mm}
+s_k\norm{p_{k+1}}_2
+s_k\norm{q_{k+1}}_2
+1
\Big).
\end{aligned}
\]
In particular,
\[
s_1=\frac{1}{\abs{\alpha_1}}.
\]
\end{theorem}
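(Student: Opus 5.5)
The plan is to start from the block inverse formula \eqref{EQ:5} established in the proof of Theorem~\ref{thm:qaca-cur-equivalence}. With $p_{k+1}=A(I_k,j_{k+1})$, $q_{k+1}=A(i_{k+1},J_k)$ and Schur complement $\alpha_{k+1}=a-q_{k+1}W_k^{-1}p_{k+1}\neq0$, it writes
\[
W_{k+1}^{-1}
=
\begin{bmatrix}
W_k^{-1} & 0\\
0 & 0
\end{bmatrix}
+
\begin{bmatrix}
W_k^{-1}p_{k+1}\alpha_{k+1}^{-1}q_{k+1}W_k^{-1}
&
-W_k^{-1}p_{k+1}\alpha_{k+1}^{-1}
\\
-\alpha_{k+1}^{-1}q_{k+1}W_k^{-1}
&
\alpha_{k+1}^{-1}
\end{bmatrix}.
\]
I would apply the triangle inequality for the Frobenius norm (valid over $\mathbb{H}$, since $\|\cdot\|_F$ is the Euclidean norm of the real coordinates). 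The first term contributes exactly $s_k$. For the second term I would use $\|[X\ Y;Z\ w]\|_F\le\|X\|_F+\|Y\|_F+\|Z\|_F+|w|$, which again follows from the triangle inequality by splitting into four zero-padded blocks.

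Next, each block is bounded using submultiplicativity. Since $|xy|=|x||y|$ for quaternions, the scalar factor $\alpha_{k+1}^{-1}$ pulls out as $1/|\alpha_{k+1}|$ wherever it sits. The estimate $\|XY\|_F\le\|X\|_F\|Y\|_F$ holds for quaternion matrices by Cauchy--Schwarz applied entrywise. Alternatively, Proposition~\ref{thm_REP} gives $\|\Phi_{\mathbb{R}}(XY)\|_F=\|\Phi_{\mathbb{R}}(X)\Phi_{\mathbb{R}}(Y)\|_F\le\|\Phi_{\mathbb{R}}(X)\|_F\|\Phi_{\mathbb{R}}(Y)\|_F$ together with the factor $2$ scaling; that route yields a spurious constant, so I would instead use the direct entrywise argument $|\sum_t x_t y_t|\le\sum_t|x_t||y_t|\le\|x\|\|y\|$. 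This gives the following bounds:
\begin{itemize}
\item $\|W_k^{-1}p\alpha^{-1}qW_k^{-1}\|_F\le s_k^2\|p\|_2\|q\|_2/|\alpha|$;
\item $\|W_k^{-1}p\alpha^{-1}\|_F\le s_k\|p\|_2/|\alpha|$;
\item $\|\alpha^{-1}qW_k^{-1}\|_F\le s_k\|q\|_2/|\alpha|$;
\item $|\alpha^{-1}|=1/|\alpha|$.
\end{itemize}
For vectors the Frobenius norm and the $2$-norm coincide. Summing these four bounds and adding $s_k$ gives the claimed recursion.

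The base case is immediate: $W_1=\alpha_1$, so $s_1=|\alpha_1^{-1}|=1/|\alpha_1|$.

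The only delicate point is justifying the submultiplicativity and norm-multiplicativity over the noncommutative algebra. The order of the factors must match \eqref{EQ:5} exactly. However, since only moduli appear after the bounds are taken, noncommutativity causes no loss once $|xy|=|x||y|$ is invoked. I would state that identity explicitly, noting that it follows from $|xy|^2=xy\overline{y}\,\overline{x}=x|y|^2\overline{x}=|x|^2|y|^2$.
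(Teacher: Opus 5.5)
Your proposal is correct and follows essentially the same route as the paper: start from the block inverse formula \eqref{EQ:5}, apply the triangle inequality blockwise, and bound each block by submultiplicativity together with $|\alpha_{k+1}^{-1}|=1/|\alpha_{k+1}|$. The only difference is cosmetic: you use pure Frobenius submultiplicativity, justified entrywise over $\mathbb{H}$, where the paper invokes the mixed bound $\|XYZ\|_F\le\|X\|_F\|Y\|_2\|Z\|_2$ without proof; both give the same estimate, since $\|W_k^{-1}\|_2\le s_k$.
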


\begin{proof}
By Theorem~\ref{thm:qaca-cur-equivalence} and the triangle inequality,
\[
\begin{aligned}
\norm{W_{k+1}^{-1}}_{\F}
\le {}&
\norm{W_k^{-1}}_{\F}
+
\abs{\alpha_{k+1}}^{-1}
\norm{W_k^{-1}p_{k+1}q_{k+1}W_k^{-1}}_{\F}
\\
&+
\abs{\alpha_{k+1}}^{-1}
\norm{W_k^{-1}p_{k+1}}_{\F}
\\
&+
\abs{\alpha_{k+1}}^{-1}
\norm{q_{k+1}W_k^{-1}}_{\F}
+
\abs{\alpha_{k+1}}^{-1}.
\end{aligned}
\]
Using
\[
\norm{XYZ}_{\F}\leq
\norm{X}_{\F}\norm{Y}_2\norm{Z}_2,
\]
and the corresponding two-factor inequalities gives
\[
\norm{W_k^{-1}p_{k+1}q_{k+1}W_k^{-1}}_{\F}
\leq
s_k^2\norm{p_{k+1}}_2\norm{q_{k+1}}_2,
\]
\[
\norm{W_k^{-1}p_{k+1}}_{\F}
\leq s_k\norm{p_{k+1}}_2,
\qquad
\norm{q_{k+1}W_k^{-1}}_{\F}
\leq s_k\norm{q_{k+1}}_2.
\]
Substitution proves the result.
\end{proof}

\begin{definition}[QACA inverse growth factor]
For $k\geq1$, define
\[
\gamma_{k+1}
=
\max\left\{
\norm{p_{k+1}}_2,\,
\norm{q_{k+1}}_2
\right\}.
\]
\end{definition}

\begin{corollary}[Inverse bound in terms of pivots and growth factors]
\label{cor:growth}
With $s_k=\norm{W_k^{-1}}_{\F}$ and $\gamma_{k+1}$ as above,

\[
s_{k+1}\le s_k+\frac{s_k^2\gamma_{k+1}^2+2s_k\gamma_{k+1}+1}{|\alpha_{k+1}|}
=s_k+\frac{(s_k\gamma_{k+1}+1)^2}{|\alpha_{k+1}|}.  
\]

%\[
%s_{k+1}
%\leq
%s_k+
%\frac{s_k\gamma_{k+1}^2+2\gamma_{k+1}+1}
%{\abs{\alpha_{k+1}}}.  
%\]
 
Consequently,
\[
s_k
\leq
s_1+
\sum_{\ell=1}^{k-1}
\frac{(s_\ell\gamma_{\ell+1}+1)^2}
{\abs{\alpha_{\ell+1}}},
\qquad
s_1=\frac1{\abs{\alpha_1}}.
\]
\end{corollary}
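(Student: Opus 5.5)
The plan is to derive the corollary directly from the recursion in Theorem~\ref{thm:frob-recursion}. First I replace each occurrence of $\norm{p_{k+1}}_2$ and $\norm{q_{k+1}}_2$ by their common upper bound $\gamma_{k+1}$, then sum the resulting one-step inequality telescopically. Every term inside the parentheses of Theorem~\ref{thm:frob-recursion} is a product of nonnegative quantities, and $s_k\ge0$. So the right-hand side is monotone nondecreasing in both $\norm{p_{k+1}}_2$ and $\norm{q_{k+1}}_2$, and substituting the larger value $\gamma_{k+1}$ preserves the inequality.

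Concretely, by the definition of $\gamma_{k+1}$ we have $\norm{p_{k+1}}_2\norm{q_{k+1}}_2\le\gamma_{k+1}^2$ and $s_k\norm{p_{k+1}}_2+s_k\norm{q_{k+1}}_2\le 2s_k\gamma_{k+1}$. Inserting these into Theorem~\ref{thm:frob-recursion} gives
\[
s_{k+1}\le s_k+\frac{s_k^2\gamma_{k+1}^2+2s_k\gamma_{k+1}+1}{|\alpha_{k+1}|}.
\]
The numerator is recognized as the perfect square $(s_k\gamma_{k+1}+1)^2$. This is a purely real, commutative identity, since $s_k$ and $\gamma_{k+1}$ are nonnegative reals, so no quaternionic ordering issues arise. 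The division by $|\alpha_{k+1}|$ is legitimate because the pivots are nonzero by the standing assumptions of Theorem~\ref{thm:qaca-cur-equivalence}.

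For the cumulative bound, I write the one-step inequality for each index $\ell=1,\dots,k-1$ and add them. The left- and right-hand $s_\ell$ terms telescope, leaving
\[
s_k-s_1\le\sum_{\ell=1}^{k-1}\frac{(s_\ell\gamma_{\ell+1}+1)^2}{|\alpha_{\ell+1}|}.
\]
Equivalently, this is a finite induction on $k$ with base case $k=1$, where the sum is empty. The value $s_1=1/|\alpha_1|$ comes from $W_1=\alpha_1$, a $1\times1$ matrix: $W_1^{-1}=\alpha_1^{-1}$, and $|\alpha_1^{-1}|=|\bar\alpha_1|/|\alpha_1|^2=1/|\alpha_1|$, using the quaternion inverse formula.

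There is no real obstacle here. The only point needing care is that the bound is implicit: the $s_\ell$ still appear on the right-hand side. The statement is therefore a recursive or cumulative bound, not a closed form, and I will not attempt to eliminate the $s_\ell$.
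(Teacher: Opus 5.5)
Your proposal is correct and follows the paper's own argument. You replace $\|p_{k+1}\|_2$ and $\|q_{k+1}\|_2$ by their upper bound $\gamma_{k+1}$ in Theorem~\ref{thm:frob-recursion}, recognize the perfect square $(s_k\gamma_{k+1}+1)^2$, and telescope the one-step inequality. Your monotonicity remark and the derivation of $s_1=1/|\alpha_1|$ from $W_1=\alpha_1$ make explicit steps that the paper leaves implicit.
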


\begin{proof}
Since
\[
\norm{p_{k+1}}_2\leq\gamma_{k+1},
\qquad
\norm{q_{k+1}}_2\leq\gamma_{k+1},
\]
the estimate in Theorem~\ref{thm:frob-recursion} gives the first
inequality. Iteration gives the second.
\end{proof}

\begin{corollary}[Uniform pivot--growth bound]
\label{cor:uniform}
Suppose that for some $\alpha_{\min}>0$ and $\gamma\geq0$,
\[
\abs{\alpha_\ell}\geq\alpha_{\min},
\qquad
\gamma_\ell\leq\gamma
\]
for all relevant $\ell$. Then
\[
s_{k+1}
\leq
s_k+\frac{(\gamma\,s_k+1)^2}{\alpha_{\min}}.
\]
For $\gamma=0$ the recursion reduces to
\[
s_k\leq
s_1+\frac{k-1}{\alpha_{\min}}.
\]
Additionally, if we assume that $(\gamma s_k+1)^2\leq L (\gamma s_k+1),$ for some constant $L.$ Then,
\[
s_k
\leq
\left(s_1+\frac{1}{\gamma}\right)
\left(1+\frac{L\gamma}{\alpha_{min}}\right)^{k-1}
-\frac{1}{\gamma},
\qquad \gamma>0,
\]
% Therefore
% \[
% s_k
% \leq
% \left(s_1+\frac{2\gamma+1}{\gamma^2}\right)
% \left(1+\frac{\gamma^2}{\alpha_{\min}}\right)^{k-1}
% -\frac{2\gamma+1}{\gamma^2},
% \qquad \gamma>0,
% \]
% \[
% s_k
% \leq
% \left(s_1+\frac{2\gamma+1}{\gamma^2}\right)
% \left(1+\frac{\gamma^2}{\alpha_{\min}}\right)^{k-1}
% -\frac{2\gamma+1}{\gamma^2},
% \qquad \gamma>0,
% \]
where
\[
s_1\leq\frac1{\alpha_{\min}}.
\]

\end{corollary}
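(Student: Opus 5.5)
The plan is to derive all three assertions from the one-step inequality of Corollary~\ref{cor:growth},
\[
s_{k+1}\le s_k+\frac{(s_k\gamma_{k+1}+1)^2}{\abs{\alpha_{k+1}}},
\]
using only monotonicity and an elementary treatment of the resulting scalar recursion.

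\textbf{Step 1.} The right-hand side is nondecreasing in $\gamma_{k+1}\ge 0$, because $s_k\ge0$. It is nonincreasing in $\abs{\alpha_{k+1}}>0$. Substituting $\gamma_{k+1}\le\gamma$ and $\abs{\alpha_{k+1}}\ge\alpha_{\min}$ therefore gives the first displayed inequality $s_{k+1}\le s_k+(\gamma s_k+1)^2/\alpha_{\min}$. Also $s_1=1/\abs{\alpha_1}\le 1/\alpha_{\min}$ by Theorem~\ref{thm:frob-recursion}, which is the final claim.

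\textbf{Step 2 (the case $\gamma=0$).} The recursion collapses to $s_{\ell+1}\le s_\ell+1/\alpha_{\min}$. Summing (telescoping) over $\ell=1,\dots,k-1$ yields $s_k\le s_1+(k-1)/\alpha_{\min}$.

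\textbf{Step 3 (the case $\gamma>0$).} Assume $(\gamma s_\ell+1)^2\le L(\gamma s_\ell+1)$ for every $\ell\le k-1$. The recursion then becomes affine:
\[
s_{\ell+1}\le \Big(1+\frac{L\gamma}{\alpha_{\min}}\Big)s_\ell+\frac{L}{\alpha_{\min}}.
\]
We shift by the fixed point and set $t_\ell=s_\ell+1/\gamma$. Since
\[
\Big(1+\frac{L\gamma}{\alpha_{\min}}\Big)\frac1\gamma=\frac1\gamma+\frac{L}{\alpha_{\min}},
\]
adding $1/\gamma$ to both sides gives exactly $t_{\ell+1}\le (1+L\gamma/\alpha_{\min})\,t_\ell$. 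The factor is positive, so induction gives $t_k\le(1+L\gamma/\alpha_{\min})^{k-1}t_1$. Subtracting $1/\gamma$ from both sides produces the stated bound.

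The only delicate point is Step 3. One must check that the hypothesis on $L$ holds at every index $\ell$ used in the induction, not merely at $k$. The shift by $1/\gamma$ is what makes the affine recursion close into a clean geometric bound. Everything else is monotone substitution.
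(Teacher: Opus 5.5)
Your proposal is correct and follows the paper's proof: both take the first inequality from Corollary~\ref{cor:growth}, telescope when $\gamma=0$, and for $\gamma>0$ reduce to the affine recursion $s_{\ell+1}\le(1+c)s_\ell+d$ with $c=L\gamma/\alpha_{\min}$ and $d=L/\alpha_{\min}$. Your shift $t_\ell=s_\ell+1/\gamma$ is the paper's geometric sum $d\,\bigl((1+c)^{k-1}-1\bigr)/c$ rewritten using $d/c=1/\gamma$, and the positivity of $1+c$ that you assert without justification does hold: dividing the hypothesis by $\gamma s_\ell+1>0$ gives $L\ge 1$.
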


\begin{proof}
The first inequality follows immediately from Corollary~\ref{cor:growth}.
For $\gamma>0$, put
\[
c=\frac{L\gamma}{\alpha_{\min}},
\qquad
d=\frac{L}{\alpha_{\min}}.
\]
% \[
% c=\frac{\gamma^2}{\alpha_{\min}},
% \qquad
% d=\frac{2\gamma+1}{\alpha_{\min}}.
% \]
Then
\[
s_{k+1}\leq(1+c)s_k+d.
\]
Iterating this affine recursion gives
\[
s_k\leq(1+c)^{k-1}s_1+
d\frac{(1+c)^{k-1}-1}{c},
\]
which is the stated result.
The case $\gamma=0$ follows directly.
\end{proof}

%\begin{corollary}
%\label{cor:uniform}
%Suppose that for some $\alpha_{\min}>0$ and $\gamma\geq0$,
%\[
%\abs{\alpha_\ell}\geq\alpha_{\min},
%\qquad
%\gamma_\ell\leq\gamma
%\]
%for all relevant $\ell$. Then
%\[
%s_{k+1}
%\leq
%s_k+\frac{(\gamma\,s_k+1)^2}{\alpha_{\min}}.
%\]
%Therefore
%\[
%s_k
%\leq
%\left(s_1+\frac{2\gamma+1}{\gamma^2}\right)
%\left(1+\frac{\gamma^2}{\alpha_{\min}}\right)^{k-1}
%-\frac{2\gamma+1}{\gamma^2},
%\qquad \gamma>0,
%\]
%where
%\[
%s_1\leq\frac1{\alpha_{\min}}.
%\]
%For $\gamma=0$ the recursion reduces to
%\[
%s_k\leq
%s_1+\frac{k-1}{\alpha_{\min}}.
%\]
%\end{corollary}
%
%\begin{proof}
%The first inequality follows immediately from Corollary~\ref{cor:growth}.
%For $\gamma>0$, put
%\[
%c=\frac{\gamma^2}{\alpha_{\min}},
%\qquad
%d=\frac{2\gamma+1}{\alpha_{\min}}.
%\]
%Then
%\[
%s_{k+1}\leq(1+c)s_k+d.
%\]
%Iterating this affine recursion gives
%\[
%s_k\leq(1+c)^{k-1}s_1+
%d\frac{(1+c)^{k-1}-1}{c},
%\]
%which is the stated result.
%The case $\gamma=0$ follows directly.
%\end{proof}

\begin{theorem}[Cross-Conditioning and Error Bound of QACA]
\label{thm:qaca-cross-conditioning}
Let $A\in\mathbb{H}^{m\times n},$
and suppose that QACA selects $k$ rows and $k$ columns with index
sets
\[
I_k=\{i_1,i_2,\ldots,i_k\},
\qquad
J_k=\{j_1,j_2,\ldots,j_k\}.
\]
Define
\[
C_k=A(:,J_k)\in\mathbb{H}^{m\times k},\quad R_k=A(I_k,:)\in\mathbb{H}^{k\times n},
\]
and the selected intersection matrix
\[
W_k=A(I_k,J_k)\in\mathbb{H}^{k\times k}.
\]
Assume that $W_k$ is nonsingular. Let us  permute rows and columns so that the selected rows and
columns appear first
\[
A=
\begin{bmatrix}
W_k & F_k\\
G_k & D_k
\end{bmatrix}.
\]
Then, we have
\[
{
\|A-A_k\|_F
\leq
\|D_k\|_F+
\|G_k\|_2\|W_k^{-1}\|_2\|F_k\|_F,
}
\]
and
\[
{
\|A-A_k\|_2
\leq
\|D_k\|_2+
\|G_k\|_2\|W_k^{-1}\|_2\|F_k\|_2,
}
\]
where $A_k,$ is the approximation computed by the QACA. Thus, for fixed $F_k$ and $G_k$, the accuracy and stability of the
QACA approximation are directly controlled by
\[
\|W_k^{-1}\|_2
=
\frac{1}{\sigma_{\min}(W_k)}.
\]
\end{theorem}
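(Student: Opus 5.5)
The plan is to rely on Theorem~\ref{thm:qaca-cur-equivalence}, which gives $A_k=C_kW_k^{-1}R_k$, and to compute the residual blockwise after the permutation. Row and column permutations are quaternion matrices with real entries, each having exactly one $1$ per row and column. They commute with quaternion scalars and preserve both the Frobenius and spectral norms, so we may assume without loss of generality that the selected indices come first. In that ordering
\[
C_k=\begin{bmatrix}W_k\\ G_k\end{bmatrix},\qquad R_k=\begin{bmatrix}W_k & F_k\end{bmatrix}.
\]
Hence, keeping the order of the quaternion factors,
\[
A_k=\begin{bmatrix}W_k\\ G_k\end{bmatrix}W_k^{-1}\begin{bmatrix}W_k & F_k\end{bmatrix}
=\begin{bmatrix}W_k & F_k\\ G_k & G_kW_k^{-1}F_k\end{bmatrix},
\]
and therefore
\[
A-A_k=\begin{bmatrix}0&0\\0&D_k-G_kW_k^{-1}F_k\end{bmatrix}.
\]
This is consistent with Corollary~\ref{cor:interpolation}. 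Since padding with zero blocks changes neither norm, we get $\|A-A_k\|_\bullet=\|D_k-G_kW_k^{-1}F_k\|_\bullet$ for $\bullet\in\{F,2\}$.

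The triangle inequality then gives $\|A-A_k\|\le\|D_k\|+\|G_kW_k^{-1}F_k\|$. What remains is submultiplicativity over $\mathbb{H}$: we need
\[
\|XYZ\|_F\le\|X\|_2\|Y\|_2\|Z\|_F\qquad\text{and}\qquad \|XYZ\|_2\le\|X\|_2\|Y\|_2\|Z\|_2.
\]
This is the step requiring care, because the spectral norm of a quaternion matrix must be handled properly. The cleanest route is the real embedding of Proposition~\ref{thm_REP}, which is multiplicative and satisfies $\|\Phi_{\mathbb R}(Q)\|_F=2\|Q\|_F$. For the spectral norm, define $\|Q\|_2=\sup_{x\ne0}\|Qx\|/\|x\|$. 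The map $x\mapsto \Phi$-embedding of the column vector $x$ (its first block column, a $4m\times 4$ real matrix) scales the norm by $2$, and applying the same argument to all real vectors in $\mathbb{R}^{4n}$ shows $\|Q\|_2=\|\Phi_{\mathbb R}(Q)\|_2$. The singular values of $\Phi_{\mathbb R}(Q)$ are those of $Q$, each repeated four times, by the QSVD, which is the fact behind the rank-preservation property.

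With these identities, the needed inequalities follow from the real ones:
\[
2\|XYZ\|_F=\|\Phi(X)\Phi(Y)\Phi(Z)\|_F\le\|\Phi(X)\|_2\|\Phi(Y)\|_2\|\Phi(Z)\|_F=2\|X\|_2\|Y\|_2\|Z\|_F.
\]
The spectral-norm inequality follows the same way. Alternatively, one can argue column by column directly, using $\|Xz\|\le\|X\|_2\|z\|$ for each column $z$ of $YZ$; this avoids the embedding for the Frobenius case.

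Applying these inequalities with $X=G_k$, $Y=W_k^{-1}$ and $Z=F_k$ yields both displayed bounds. Finally, the QSVD $W_k=U\Sigma V^*$ with unitary $U$ and $V$ gives $W_k^{-1}=V\Sigma^{-1}U^*$, so $\|W_k^{-1}\|_2=1/\sigma_{\min}(W_k)$. The main obstacle is rigorously justifying that the quaternion spectral norm agrees with that of the real embedding, or equivalently with $\sigma_{\max}$ from the QSVD. I would handle it via the embedding argument above.
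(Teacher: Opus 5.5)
Your proposal is correct and follows the paper's proof step for step: you invoke the CUR representation $A_k=C_kW_k^{-1}R_k$, permute so that $A-A_k$ reduces to the single block $D_k-G_kW_k^{-1}F_k$, and then apply the triangle inequality, submultiplicativity, and $\|W_k^{-1}\|_2=1/\sigma_{\min}(W_k)$. Your extra justification of quaternion submultiplicativity, column by column or via $\|Q\|_2=\|\Phi_{\mathbb{R}}(Q)\|_2$, fills in a step the paper simply asserts; in that argument the relevant isometry is $x\mapsto$ the first column of $\Phi_{\mathbb{R}}(x)$, whose norm is exactly $\|x\|$, not $2\|x\|$.
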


\begin{proof}
By the QCUR equivalence theorem, the approximation generated after
$k$ successful QACA steps is
\[
A_k=C_kW_k^{-1}R_k.
\]
After permuting rows and columns so that the selected rows and
columns appear first, we have
\[
C_k=
\begin{bmatrix}
W_k\\
G_k
\end{bmatrix},
\qquad
R_k=
\begin{bmatrix}
W_k & F_k
\end{bmatrix}.
\]
Therefore,
\[
\begin{aligned}
A_k
&=
\begin{bmatrix}
W_k\\
G_k
\end{bmatrix}
W_k^{-1}
\begin{bmatrix}
W_k & F_k
\end{bmatrix}.
\end{aligned}
\]
Since quaternion matrix multiplication is associative, we may first
compute
\[
\begin{bmatrix}
W_k\\
G_k
\end{bmatrix}
W_k^{-1}
=
\begin{bmatrix}
I_k\\
G_kW_k^{-1}
\end{bmatrix}.
\]
Hence
\[
A_k
=
\begin{bmatrix}
I_k\\
G_kW_k^{-1}
\end{bmatrix}
\begin{bmatrix}
W_k&F_k
\end{bmatrix},
\]
which gives
\[
A_k=
\begin{bmatrix}
W_k&F_k\\
G_k&G_kW_k^{-1}F_k
\end{bmatrix}.
\]
Subtracting this expression from
\[
A=
\begin{bmatrix}
W_k&F_k\\
G_k&D_k
\end{bmatrix}
\]
gives
\[
A-A_k
=
\begin{bmatrix}
0&0\\
0&D_k-G_kW_k^{-1}F_k
\end{bmatrix}.
\]
Therefore,
\[
\|A-A_k\|_F
=
\|D_k-G_kW_k^{-1}F_k\|_F
\]
and
\[
\|A-A_k\|_2
=
\|D_k-G_kW_k^{-1}F_k\|_2.
\]
Using the triangle inequality,
\[
\begin{aligned}
\|D_k-G_kW_k^{-1}F_k\|_F
&\leq
\|D_k\|_F+
\|G_kW_k^{-1}F_k\|_F.
\end{aligned}
\]
By the submultiplicativity of the Frobenius and spectral norms,
\[
\|G_kW_k^{-1}F_k\|_F
\leq
\|G_k\|_2
\|W_k^{-1}\|_2
\|F_k\|_F.
\]
Hence
\[
\|A-A_k\|_F
\leq
\|D_k\|_F+
\|G_k\|_2
\|W_k^{-1}\|_2
\|F_k\|_F.
\]
Similarly,
\[
\begin{aligned}
\|A-A_k\|_2
&=
\|D_k-G_kW_k^{-1}F_k\|_2\\
&\leq
\|D_k\|_2+
\|G_k\|_2
\|W_k^{-1}\|_2
\|F_k\|_2.
\end{aligned}
\]
Finally, since $W_k$ is nonsingular,
\[
\|W_k^{-1}\|_2
=
\frac{1}{\sigma_{\min}(W_k)}.
\]
This proves the result. The last fact shows that the accuracy and stability of the
QACA approximation are directly controlled by the spectral norm of the pivot matrix $
\|W_k^{-1}\|_2
=
\frac{1}{\sigma_{\min}(W_k)},$
because $F_k$ and $G_k$, are fixed.
% After simultaneous permutation of the selected rows and columns,
% $A$ can be written as
% \[
% A=
% \begin{bmatrix}
% W_k & F_k\\
% G_k & D_k
% \end{bmatrix},
% \]
% and
% \[
% A_k=
% \begin{bmatrix}
% W_k & F_k\\
% G_k & G_kW_k^{-1}F_k
% \end{bmatrix}.
% \]
% Consequently,
% \[
% A-A_k=
% \begin{bmatrix}
% 0&0\\
% 0&D_k-G_kW_k^{-1}F_k
% \end{bmatrix}.
% \]
% In particular,
% \[
% {
% \|A-A_k\|_F
% =
% \|D_k-G_kW_k^{-1}F_k\|_F
% }
% \]
% and
% \[
% {
% \|A-A_k\|_2
% =
% \|D_k-G_kW_k^{-1}F_k\|_2.
% }
% \]
% Moreover,
% \[
% {
% \|A-A_k\|_F
% \leq
% \|D_k\|_F+
% \|G_k\|_2\|W_k^{-1}\|_2\|F_k\|_F,
% }
% \]
% and
% \[
% {
% \|A-A_k\|_2
% \leq
% \|D_k\|_2+
% \|G_k\|_2\|W_k^{-1}\|_2\|F_k\|_2.
% }
% \]
% Thus, for fixed $F_k$ and $G_k$, the accuracy and stability of the
% QACA approximation are directly controlled by
% \[
% \|W_k^{-1}\|_2
% =
% \frac{1}{\sigma_{\min}(W_k)}.
% \]
\end{proof}

Note that in cross approximation, it is not necessary for the number of selected rows and column indices to be the same. If the number of row indices differs from the number of column indices, we replace Moore-Penrose inverse instead of inverse for computing the middle matrix in the CUR approximation. However, the QACA always provides approximations with exactly the same number of row and column indices.

\begin{corollary}[Residual bound using pivots and growth factors]
\label{cor:residual-growth}
Under the assumptions of Corollary~\ref{cor:growth},
\[
\norm{R_k}_{\F}
\leq
\norm{D_k}_{\F}
+
\norm{G_k}_2
\left[
\frac1{\abs{\alpha_1}}
+
+
\sum_{\ell=1}^{k-1}
\frac{(s_\ell\gamma_{\ell+1}+1)^2}
{\abs{\alpha_{\ell+1}}}
\right]
\norm{F_k}_{\F}.
\]
Thus the residual is controlled by the pivots together with the
interpolation/elimination growth factors.
\end{corollary}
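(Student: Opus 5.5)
The plan is to combine the Frobenius error bound of Theorem~\ref{thm:qaca-cross-conditioning} with the recursive inverse bound of Corollary~\ref{cor:growth}. Here $R_k$ denotes the residual $A-A_k$ after $k$ steps, not the row factor.

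\textbf{Step 1: the residual bound.} Theorem~\ref{thm:qaca-cross-conditioning} gives
\[
\norm{R_k}_{\F}=\norm{A-A_k}_{\F}\le \norm{D_k}_{\F}+\norm{G_k}_2\norm{W_k^{-1}}_2\norm{F_k}_{\F}.
\]
The required assumptions are the nonsingularity of $W_k$ and the nonzero pivots. These are exactly the standing hypotheses of Theorem~\ref{thm:qaca-cur-equivalence} and Corollary~\ref{cor:growth}.

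\textbf{Step 2: pass to the Frobenius norm.} Replace the spectral norm of the inverse by its Frobenius norm, using $\norm{W_k^{-1}}_2\le\norm{W_k^{-1}}_{\F}=s_k$. This inequality holds for quaternion matrices because the spectral norm is the largest singular value from the QSVD, while the Frobenius norm is the $\ell_2$ norm of all singular values. Equivalently, the real embedding of Proposition~\ref{thm_REP} scales both norms consistently, by the factor $2$ for the Frobenius norm and $1$ for the spectral norm, so the real inequality transfers.

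\textbf{Step 3: insert the recursion.} Corollary~\ref{cor:growth} gives
\[
s_k\le \frac{1}{\abs{\alpha_1}}+\sum_{\ell=1}^{k-1}\frac{(s_\ell\gamma_{\ell+1}+1)^2}{\abs{\alpha_{\ell+1}}}.
\]
Substituting this into the bound of Step 1 yields the stated estimate. The stray ``$+\,+$'' in the displayed statement is read as a single plus.

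All steps are routine given the earlier results. The only point needing care is Step 2, the comparison $\norm{\cdot}_2\le\norm{\cdot}_{\F}$ in the quaternion setting. I would justify it by the real embedding, noting that $\norm{\Phi_{\mathbb{R}}(X)}_2=\norm{X}_2$ since the singular values of $\Phi_{\mathbb{R}}(X)$ are those of $X$, each repeated four times. The remaining work is bookkeeping to ensure the indices $p_{\ell+1},q_{\ell+1}$ in $\gamma_{\ell+1}$ are those of the actual QACA pivots.
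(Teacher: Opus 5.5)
Your proposal is correct and follows the paper's own argument, which simply substitutes the recursion of Corollary~\ref{cor:growth} into the Frobenius bound of Theorem~\ref{thm:qaca-cross-conditioning}; you also make explicit the step $\norm{W_k^{-1}}_2\le\norm{W_k^{-1}}_{\F}=s_k$, which the paper's one-line proof leaves implicit. One small caution: $\Phi_{\mathbb{R}}$ scales the two norms by different factors ($1$ and $2$), so a bare transfer of the real inequality only gives $\norm{X}_2\le 2\norm{X}_{\F}$, and the factor-one comparison should therefore rest on your QSVD argument (or on the fourfold multiplicity of the singular values of $\Phi_{\mathbb{R}}(X)$) rather than on the embedding alone.
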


\begin{proof}
Substitute the recursive estimate of Corollary~\ref{cor:growth}
into Theorem~\ref{thm:qaca-cross-conditioning}.
\end{proof}

\section{Computational Complexity of QACA}\label{sec:complex}
The native quaternion ACA algorithm (Algorithm 1 in the paper) constructs a rank-$k$ approximation $A \approx UV^T$, where $U \in \mathbb{H}^{m \times k}$ and $V \in \mathbb{H}^{n \times k}$. The algorithm proceeds iteratively, adding one rank at a time. At each iteration $k$ (where $k \leq r$, the final rank), the algorithm performs the following key operations:

\begin{enumerate}
    \item \textbf{Residual Row Computation:} 
    \[
    r = A(i,:) - U(i,1:k-1) V(:,1:k-1)^T
    \]
    The product $U(i,1:k-1) \in \mathbb{H}^{1 \times (k-1)}$ and $V(:,1:k-1)^T \in \mathbb{H}^{(k-1) \times n}$ costs $\mathcal{O}(k n)$ quaternion multiplications.

    \item \textbf{Residual Column Computation:}
    \[
    c = A(:,j) - U(:,1:k-1) V(j,1:k-1)^T
    \]
    The product $U(:,1:k-1) \in \mathbb{H}^{m \times (k-1)}$ and $V(j,1:k-1)^T \in \mathbb{H}^{(k-1) \times 1}$ costs $\mathcal{O}(m k)$ quaternion multiplications.

    \item \textbf{Pivot Selection:} Finding the maximum entry in vectors of length $n$ and $m$ costs $\mathcal{O}(n)$ and $\mathcal{O}(m)$ operations, respectively.
\end{enumerate}
Thus, the dominant cost per iteration is $\mathcal{O}(k(m+n))$. Since $k \leq r$, the per-iteration cost is bounded by $\mathcal{O}(r(m+n))$. Summing over $r$ iterations, the total computational complexity is:
\[
\sum_{k=1}^{r} \mathcal{O}(k(m+n)) = \mathcal{O}\left((m+n) \sum_{k=1}^{r} k\right) = \mathcal{O}(r^2(m+n))
\]
The key point is the linear dependence on $m$ and $n$, which makes the algorithm highly efficient for large matrices. The computational complexity of the QSVD is typically $\mathcal{O}(mn \min(m,n))$ for full decomposition, or $\mathcal{O}(mnr)$ for computing only the first $r$ singular values and vectors. This is significantly higher than the $\mathcal{O}(r(m+n))$ complexity of QACA, especially for large matrices where $r \ll \min(m,n)$.

\begin{table}[h]
\caption{Computational Complexity Comparison}
\centering
\begin{tabular}{|c|c|c|}
\hline
\textbf{Algorithm} & \textbf{Complexity} & \textbf{Key Features} \\
\hline
QSVD & $\mathcal{O}(mn\min(m,n))$ & Optimal approximation, full decomposition \\
QACA & $\mathcal{O}(r^2(m+n))$ & Near-optimal, data-efficient, memory-efficient \\
\hline
\end{tabular}
\end{table}

So, the QACA is better than QSVD from the following aspects:

1. \textbf{Scalability:} The linear complexity in $m$ and $n$ makes QACA suitable for large-scale problems where QSVD is impractical.

2. \textbf{Memory Efficiency:} QACA only accesses selected rows and columns of the matrix, avoiding the need to store the full matrix explicitly.

3. \textbf{Preservation of Structure:} Operating natively in the quaternion algebra preserves the algebraic structure and natural correlations between quaternion components, unlike real embedding approaches that suffer from dimensionality blow-up and information loss.

\section{Numerical Experiments}\label{sec:experiments}
In this section we examine the proposed method for low-rank approximation of quaternion matrices. We consider medium-sized and large-scale quaternion matrices in our experiments. All experiments are performed in MATLAB using the QTFM toolbox for quaternion arithmetic. The QACA algorithm is compared against the standard SVD algorithm.

\begin{example}
 We first evaluate the performance of QACA on randomly generated quaternion matrices of small dimensions. For each test, we generate a quaternion matrix $A=U_{\text{true}} V_{\text{true}}^T \in \mathbb{H}^{m \times n}$, where $U_{\text{true}} \in \mathbb{H}^{m \times r_{\text{true}}}$ and $V_{\text{true}} \in \mathbb{H}^{n \times r_{\text{true}}}$ have random quaternion entries\footnote{We say a quaternion $
 a+ b\mathbf{i} + c\mathbf{j} + d\mathbf{k},
$ is a random quaternion if
 $a, b, c, d \sim \mathcal{N}(0,1)$ are independent standard normal random variables}.  The experimental setup is as follows:
\begin{itemize}
    \item Matrix dimensions: $m=n = 50,100,150,\ldots,400$. 
    \item True rank: $r_{\text{true}}=30$.
    \item Tolerance: $\varepsilon =10^{-10}$.
    \item Maximum rank: $r_{\max} = 32$.
\end{itemize}
For each configuration, we perform 50 independent trials and report the relative error and computing time in Table \ref{tab:results} and Figures \ref{fig:error_sub}-\ref{fig:time_sub}. The numerical results clearly indicate that the proposed QACA delivers approximations of comparable accuracy to existing methods while requiring substantially less computational effort. Moreover, the performance advantage of QACA becomes increasingly significant as the matrix dimensions grow. To illustrate, for a quaternion matrix of dimension $400 \times 400$, QACA achieves a speedup of approximately $78\times$ over the conventional quaternion SVD.

\begin{figure}[htbp]\label{fig_1}
    \centering
    \begin{subfigure}[b]{0.48\textwidth}
        \centering
        \includegraphics[width=\textwidth]{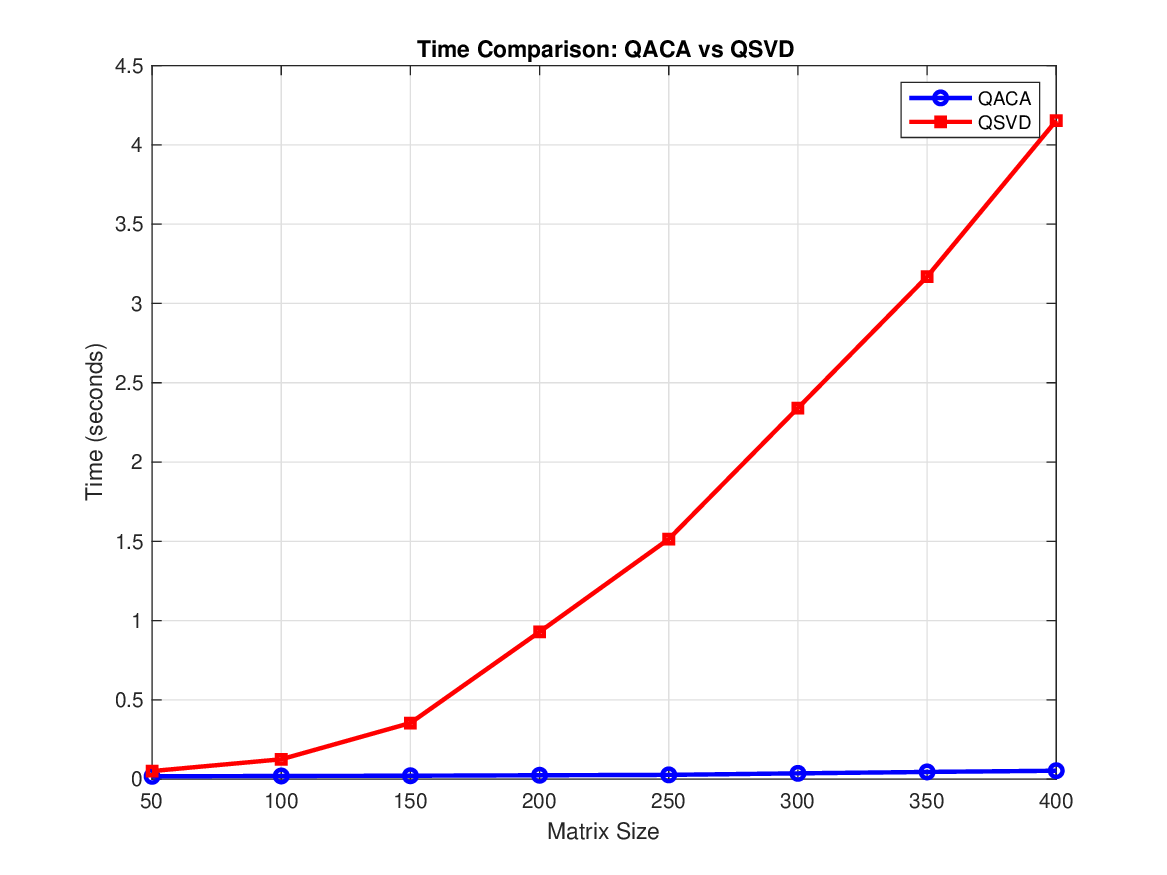}
        \caption{Time comparison}
        \label{fig:time_sub}
    \end{subfigure}
    \hfill
    \begin{subfigure}[b]{0.48\textwidth}
        \centering
        \includegraphics[width=\textwidth]{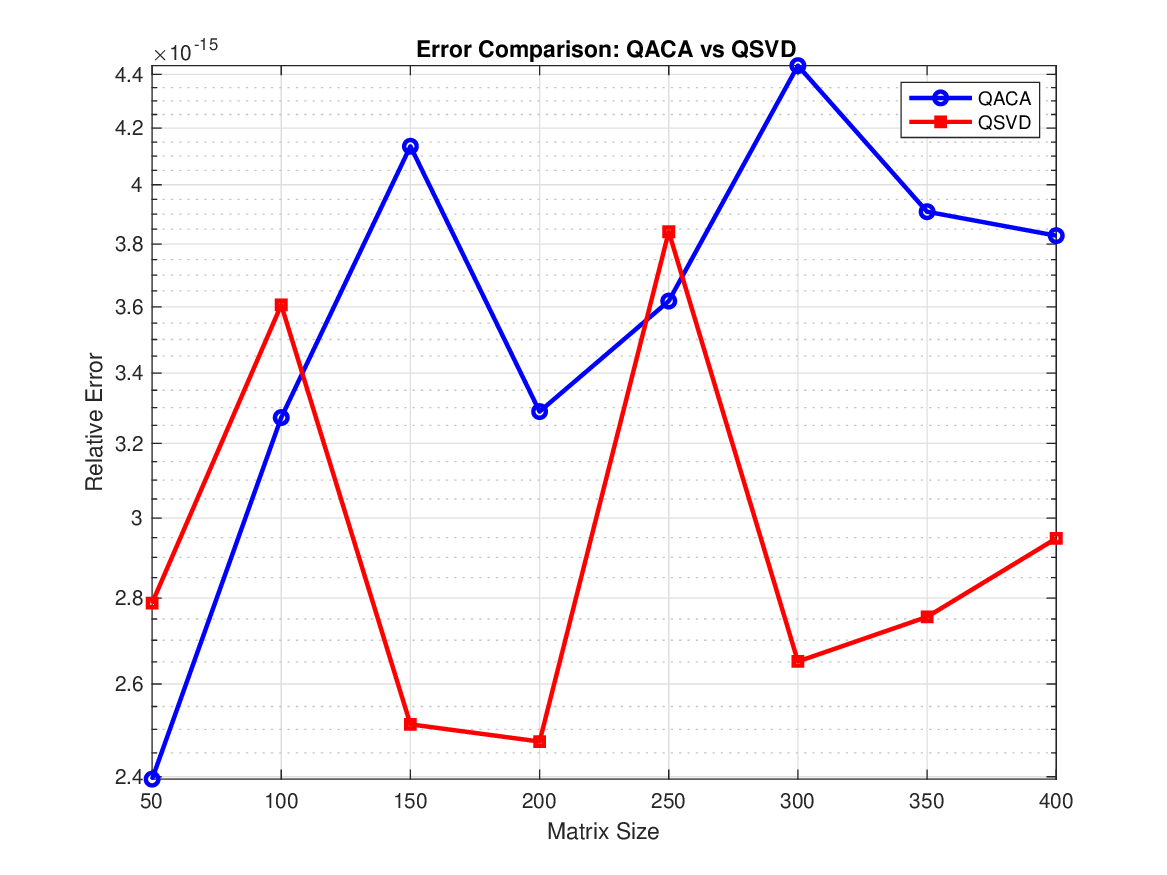}
        \caption{Error comparison}
        \label{fig:error_sub}
    \end{subfigure}
    \caption{Performance comparison between Adaptive Cross Approximation (ACA) and Singular Value Decomposition (SVD). (a) Computation time as a function of matrix size, showing ACA's superior computational efficiency across all tested matrix sizes. (b) Relative errors on a semilogarithmic scale, demonstrating that both methods achieve comparable accuracy at the machine precision level ($\sim 10^{-15}$).}
    \label{fig:combined_comparison}
\end{figure}

We also examined the performance of the QACA on noisy data. To this end, we constructed the following noisy quaternion matrix:
\[
Y = U_{\text{true}} V_{\text{true}}^T + \delta X,
\]
where \(U_{\text{true}}\in\mathbb{H}^{100\times 20}\), \(V_{\text{true}}\in\mathbb{H}^{100\times 20}\), and \(X\in\mathbb{H}^{100\times 100}\) are random matrices, and \(\delta = 0.01\). The singular values of this matrix, along with the corresponding tolerance threshold \(\delta \|Y\|_F\), are shown in Figure~\ref{fig:noisy}. We applied the QACA to the mentioned matrix using tolerance $1e-5$ and the maximum rank as $r=30$. The error history of the QACA applied to the aforementioned matrix, as new rank-1 terms are successively added, is shown in Figure~\ref{fig:noisy_2} (right). The original matrix and the reconstructed matrices are visualized in Figure~\ref{fig:noisy_2} (left) and (middle), respectively. The QACA identified the rank $20$. Additionally, we applied Algorithm \ref{alg:qaca_auto}, which does not require the maximum rank parameter. Using only the tolerance $\epsilon=1e-5$, the algorithm again detected rank 20. This clearly demonstrates the efficiency of the proposed adaptive cross algorithm for determining the rank.

\begin{figure}[htbp]
    \centering    \includegraphics[width=0.8\textwidth]{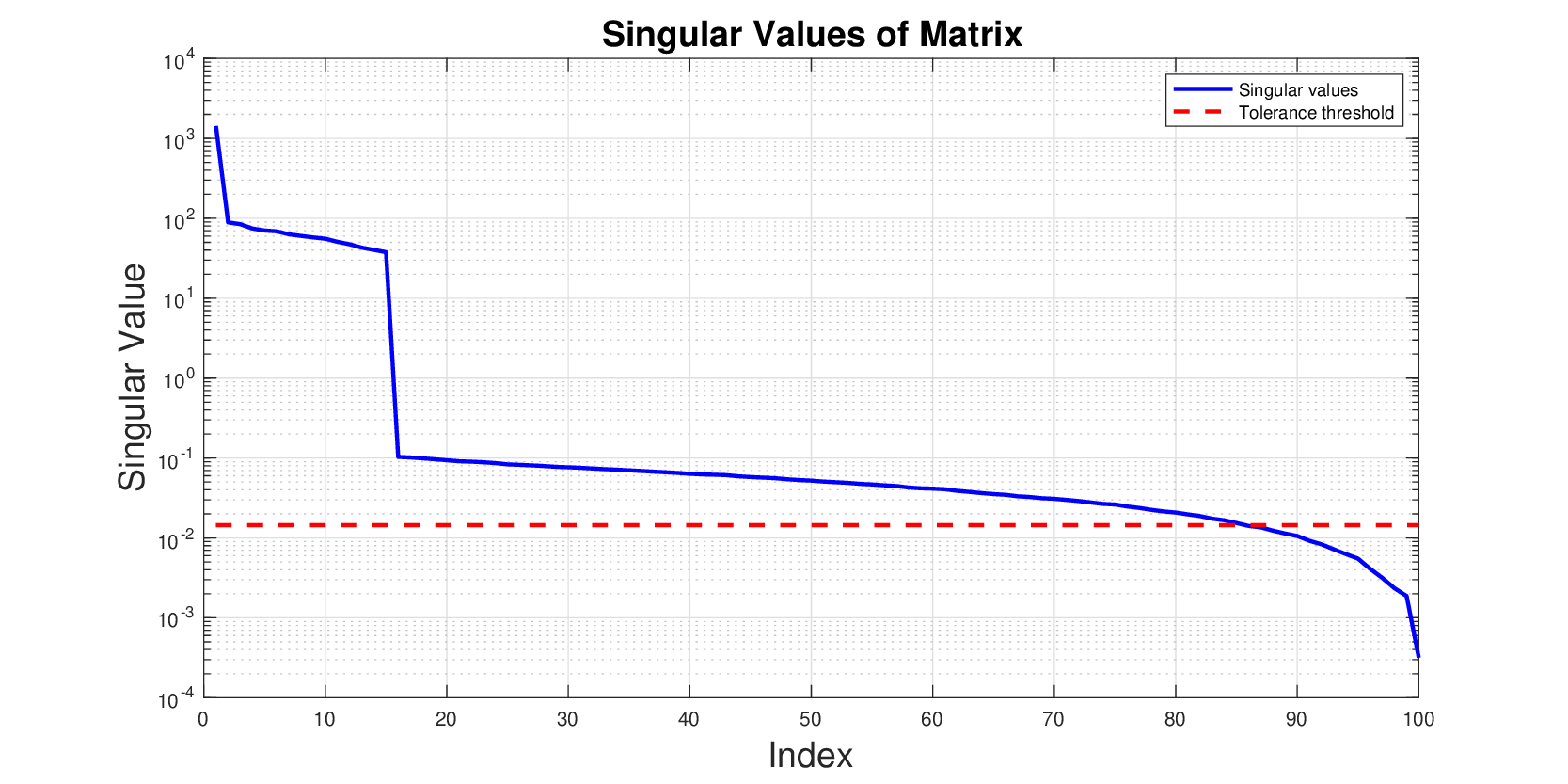}
    \caption{Singular values of the noisy quaternion matrix $Y = U_{\text{true}} V_{\text{true}}^T + \delta X$ with $\delta = 0.01$. The horizontal dashed line indicates the tolerance threshold $\delta \|Y\|_F$, used to distinguish significant singular values from noise.}
    \label{fig:noisy}
\end{figure}

\begin{figure}[htbp]
    \centering
\includegraphics[width=1.1\textwidth]{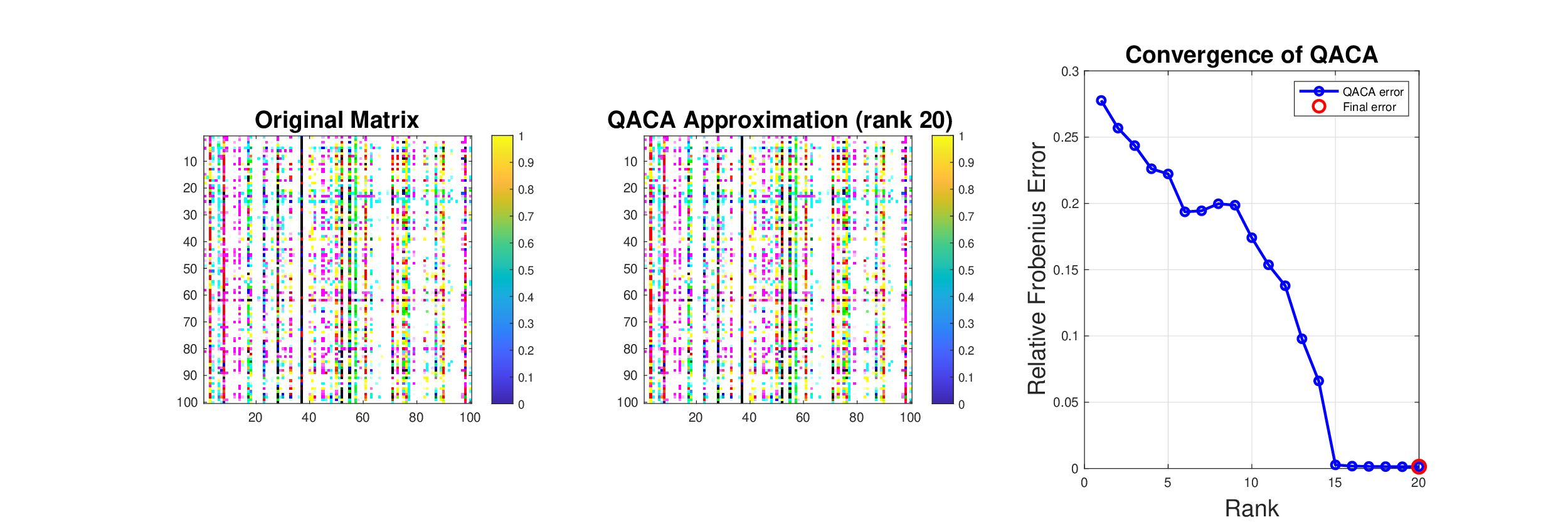}
    \caption{Performance of the QACA on the noisy quaternion matrix. (Right) Error history as rank-1 terms are successively added, showing the decay of the approximation error. (Left) The original noisy matrix. (Middle) The reconstructed matrix obtained by the QACA.}
    \label{fig:noisy_2}
\end{figure}

\begin{table}[H]
    \centering
    \caption{Experimental results for QACA and QSVD methods}
    \label{tab:results}
    \begin{tabular}{|c|r|r|r|r|}
        \hline
        \textbf{Size} & \multicolumn{2}{c|}{\textbf{Time (s)}} & \multicolumn{2}{c|}{\textbf{Error}} \\
        \cline{2-5}
        & \textbf{QACA} & \textbf{QSVD} & \textbf{QACA} & \textbf{QSVD} \\
        \hline
        50 & 0.017895 & 0.051062 & $2.395\times10^{-15}$ & $2.788\times10^{-15}$ \\
        100 & 0.020097 & 0.125350 & $3.272\times10^{-15}$ & $3.606\times10^{-15}$ \\
        150 & 0.021546 & 0.354177 & $4.135\times10^{-15}$ & $2.511\times10^{-15}$ \\
        200 & 0.024696 & 0.929008 & $3.289\times10^{-15}$ & $2.474\times10^{-15}$ \\
        250 & 0.027009 & 1.514313 & $3.618\times10^{-15}$ & $3.841\times10^{-15}$ \\
        300 & 0.036696 & 2.340746 & $4.433\times10^{-15}$ & $2.651\times10^{-15}$ \\
        350 & 0.045579 & 3.168791 & $3.908\times10^{-15}$ & $2.755\times10^{-15}$ \\
        400 & 0.053179 & 4.153262 & $3.828\times10^{-15}$ & $2.948\times10^{-15}$ \\
        \hline
    \end{tabular}
\end{table}
  
\end{example}

\begin{example}(large-scale random matrices)
 We next evaluate the scalability of QACA on large-scale quaternion matrices. These experiments are designed to assess the algorithm's performance in realistic applications where matrix dimensions are substantial and full matrix storage is infeasible.

The experimental configuration for large-scale tests includes:
\begin{itemize}
    \item Matrix dimensions: $m = n = 1000, 2000, 3000, 4000$
    \item True rank: $r_{\text{true}} = 50$
    % \item Tolerance: $\varepsilon = 10^{-4}, 10^{-6}$
    \item Maximum rank: $r_{\max} = 100$
\end{itemize}

\begin{table}[H]
    \centering
    \caption{Experimental results for QACA and QSVD methods for $r_{\text{true}} = 50$.}
    \label{tab:results_2}
    \begin{tabular}{|c|r|r|r|r|}
        \hline
        \textbf{Size} & \multicolumn{2}{c|}{\textbf{Time (s)}} & \multicolumn{2}{c|}{\textbf{Error}} \\
        \cline{2-5}
        & \textbf{QACA} & \textbf{QSVD} & \textbf{QACA} & \textbf{QSVD} \\
        \hline
        1000 &  0.508352 &  27.423186 & $1.397\times 10^{-14}$ & $3.331\times 10^{-15}$ \\
        2000 & 2.948609 & 154.414234 & $1.063\times 10^{-14}$ & $3.109\times 10^{-15}$ \\
        3000 & 10.077973 & 650.964245 & $1.523\times{10}^{-14}$ & $4.3456\times{10}^{-15}$ \\
        4000 & 22.415436 & 970.235418 & $2.511\times 10^{-14}$ & $2.5116\times 10^{-15}$ \\
        \hline
    \end{tabular}
\end{table}
The results are reported in Table \ref{tab:results_2}. The experiments demonstrate that the QACA achieves comparable accuracy to the QSVD while offering significantly greater scalability. We also repeated the experiments with true ranks $r_{\text{true}} = 10,\,30$. These results are reported in tables \ref{tab:results_3} and \ref{tab:results_4}.

\begin{table}[H]
    \centering
    \caption{Experimental results for QACA and QSVD methods for $r_{\text{true}} = 10$.}
    \label{tab:results_3}
    \begin{tabular}{|c|r|r|r|r|}
        \hline
        \textbf{Size} & \multicolumn{2}{c|}{\textbf{Time (s)}} & \multicolumn{2}{c|}{\textbf{Error}} \\
        \cline{2-5}
        & \textbf{QACA} & \textbf{QSVD} & \textbf{QACA} & \textbf{QSVD} \\
        \hline
        1000 &  0.455614 &  13.114 & $2.062\times 10^{-15}$ & $3.241\times 10^{-15}$ \\
        2000 & 3.059793 & 92.851 & $2.533\times 10^{-15}$ & $3.512\times 10^{-15}$ \\
        3000 &  9.300740 & 407.150 & $1.535\times 10^{-15}$ & $3.987\times 10^{-15}$ \\
        4000 & 21.684523 & 752.403 & $1.221\times 10^{-15}$ & $4.102\times 10^{-15}$ \\
        \hline
    \end{tabular}
\end{table}

\begin{table}[H]
    \centering
    \caption{Experimental results for QACA and QSVD methods for $r_{\text{true}} = 30$.}
    \label{tab:results_4}
    \begin{tabular}{|c|r|r|r|r|}
        \hline
        \textbf{Size} & \multicolumn{2}{c|}{\textbf{Time (s)}} & \multicolumn{2}{c|}{\textbf{Error}} \\
        \cline{2-5}
        & \textbf{QACA} & \textbf{QSVD} & \textbf{QACA} & \textbf{QSVD} \\
        \hline
        1000 & 0.451124 & 19.431 & $1.183\times 10^{-14}$ & $3.411\times 10^{-15}$ \\
        2000 & 3.103619 & 127.598 & $4.563\times 10^{-15}$ & $3.892\times 10^{-15}$ \\
        3000 & 9.195136 & 552.765 & $1.829\times 10^{-14}$ & $4.211\times 10^{-15}$ \\
        4000 & 22.231566 & 1010.541 & $3.432\times 10^{-15}$ & $4.523\times 10^{-15}$ \\
        \hline
    \end{tabular}
\end{table}

The new experiments reveal a clear and significant performance advantage of the QACA over the QSVD.

% For these large-scale tests, we use function handles to define the matrix entries implicitly, avoiding the need to store the full matrix:
% \[
% A(i,j) = \sum_{t=1}^{r_{\text{true}}} u_{it} \overline{v_{jt}},
% \]
% where $u_{it}, v_{jt}$ are random quaternion entries. This construction ensures the matrix has exact low-rank structure while allowing on-the-fly entry evaluation.
   
\end{example}

\begin{example}
(Image compression) In this example, we apply the proposed algorithm for compressing color images. Consider three images shown in Figure~\ref{fig:img}. The size of all images is $256 \times 256 \times 3$. These images are first converted to pure quaternions, where each pixel's RGB values are encoded as the imaginary parts of a quaternion. The QSVD and QACA algorithms are then applied to them for ranks 10, 30, and 50, producing low-rank approximations of the original quaternion matrices. In this example we considered the QACA for selecting the column and row indices but the middle matrix by $U=C^\dagger AR^\dagger$,  where $\dagger$, denote the Moore-Penrose inverse operation. It was established in Theorem \ref{thm:qaca-cur-equivalence} that the QACA algorithm computes a CUR approximation whose middle matrix is $U=A(I_K,J_K)^\dagger$, with $I_K$ and $J_K$ denoting the selected indices. As reported in \cite{ahmadi2024adaptive}, although images and videos possess low-rank structures, they exhibit no small tail energy, and the choice $U=A(I_K,J_K)^\dagger$ leads to unstable approximations, whereas $U=C^\dagger AR^\dagger$ yields substantially more stable results. In this example, our strategy was to select the indices via QACA while improving the approximation through the use of better middle matrices.
The running times of QACA and QSVD are reported in Table~\ref{tab:runtime_comparison}. Additionally, the compressed images obtained by both methods are visualized in Figure~\ref{fig:compre} for qualitative comparison. As can be observed, both algorithms produce visually similar compressed images, especially for higher ranks.

Our main observation is that when we select a relatively large rank, the output of QACA becomes very close to that of QSVD while achieving a significant speed-up. As shown in Table~\ref{tab:runtime_comparison}, QACA achieves more than a $4\times$ speed-up in some cases, making it particularly attractive for real-time or large-scale image processing applications. This speed advantage stems from the fact that QACA is a column-pivoting, adaptive cross-approximation method that avoids the full decomposition, whereas QSVD requires computing the complete singular value decomposition, which is computationally more expensive.

It should be highlighted that the images considered in this example are not very large, with dimensions of only $256 \times 256$ pixels. When dealing with larger images, such as high-resolution photographs or medical images, the difference in running time between the two methods becomes even more pronounced. In such scenarios, the computational savings offered by QACA can be substantial, potentially reducing processing times from hours to minutes or from minutes to seconds.

\begin{figure}[htbp]
    \centering
\includegraphics[width=0.6\textwidth]{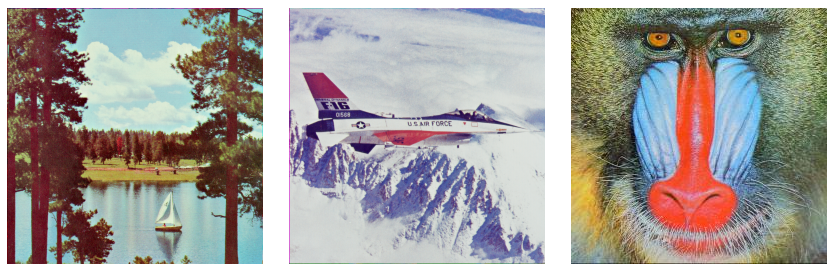}
    \caption{The sample images are used in our simulations.}
    \label{fig:img}
\end{figure}

\begin{table}[htbp]
\centering
\caption{Running Time Comparison of QACA and QSVD (in seconds)}
\label{tab:runtime_comparison}
\begin{tabular}{lrrrr}
\toprule
\multirow{2}{*}{Image} & \multirow{2}{*}{Rank} & \multicolumn{2}{c}{Time (s)} & \multirow{2}{*}{Speedup} \\
\cmidrule(lr){3-4}
 & & QACA & QSVD & \\
\midrule
\multirow{3}{*}{Baboon} & 10 & 0.386 & 1.378 & 3.57$\times$ \\
 & 30 & 0.458 & 1.382 & 3.02$\times$ \\
 & 50 & 0.645 & 1.394 & 2.16$\times$ \\
\multirow{3}{*}{Sailboat} & 10 & 0.322 & 1.402 & 4.35$\times$ \\
 & 30 & 0.462 & 1.502 & 3.25$\times$ \\
 & 50 & 0.645 & 1.472 & 2.28$\times$ \\
\multirow{3}{*}{Airplane} & 10 & 0.316 & 1.399 & 4.42$\times$ \\
 & 30 & 0.458 & 1.418 & 3.10$\times$ \\
 & 50 & 0.632 & 1.443 & 2.28$\times$ \\
\bottomrule
\end{tabular}
\end{table}

\begin{figure}[htbp]
    \centering
\includegraphics[width=1\textwidth]{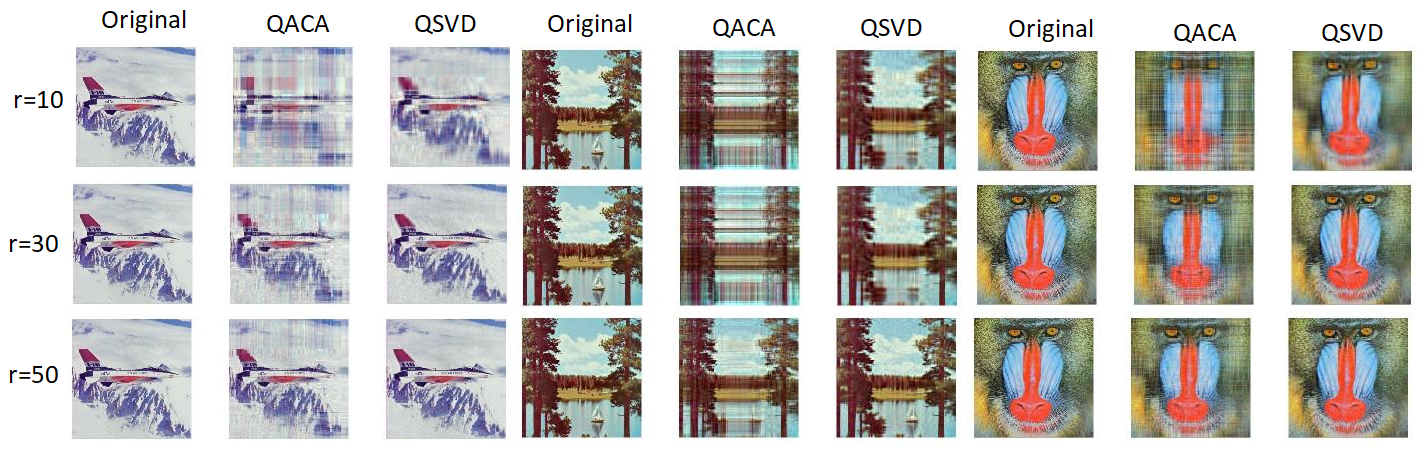}
    \caption{The samples images used in our simulations.}
    \label{fig:compre}
\end{figure}

\end{example}

\section{Conclusion}\label{sec:conclusion}
This paper has presented a comprehensive framework for adaptive cross approximation operating natively in the quaternion algebra $\mathbb{H}$. We have developed a fully quaternionic ACA algorithm that preserves algebraic structure throughout the computation, avoiding the information loss and computational overhead associated with real embedding approaches. The theoretical analysis establishes convergence guarantees, error bounds, and complexity estimates, proving that native quaternion ACA achieves quasi-optimal approximation properties comparable to the QSVD. The algorithm has complexity $\mathcal{O}(r^2(m+n))$, making it suitable for large-scale problems where full QSVD is impractical.

Our MATLAB implementation, using the QTFM toolbox, demonstrates the practical effectiveness of the approach, and numerical experiments show significant acceleration of the proposed adaptive cross approximation algorithm compared to the classical SVD.

The native quaternion ACA represents a significant advancement in quaternion matrix computations. By operating directly in the quaternion algebra, we achieve both theoretical elegance and practical efficiency, opening new possibilities for large-scale quaternionic computations in color image processing, signal processing, robotics, and beyond.

Future work in this direction includes developing theoretically grounded pivot selection criteria that account for the quaternionic structure, such as leveraging the relationship between quaternion matrices and their real or complex counterparts. Additionally, the integration of these pivoting strategies with rank estimation techniques could lead to fully adaptive algorithms that determine the rank automatically based on the data, further enhancing the practical utility of quaternion ACA in large-scale applications. Extending the QACA to split quaternion \cite{ahmadi2026iterative} is under investigation.

\bibliographystyle{plain}
\bibliography{ref}

\appendix

\section{Implementation Details}

\subsection{MATLAB Implementation Using QTFM Toolbox}

We implement the native quaternion ACA using the QTFM (Quaternion Toolbox for Matlab) \cite{qtfm}. The key implementation are:

\subsubsection{1. Quaternion Array Initialization}

\begin{lstlisting}[caption=Initializing empty quaternion arrays]
% Initialize quaternion factors with maximum rank
U = quaternion(zeros(m,maxrank), zeros(m,maxrank), zeros(m,maxrank), zeros(m,maxrank));
V = quaternion(zeros(n,maxrank), zeros(n,maxrank), zeros(n,maxrank), zeros(n,maxrank));
\end{lstlisting}

\subsubsection{2. Quaternion Arithmetic in Loops}

\begin{lstlisting}[caption=Computing residual row and column with quaternion arithmetic]
% Compute residual row (1 x n)
r = A(i,:);
if k > 1
    % V(:,1:k-1)' is (k-1) x n, U(i,1:k-1) is 1 x (k-1)
    % Product is 1 x n
    r = r - U(i,1:k-1) * V(:,1:k-1).';
end

% Compute residual column (m x 1)
c = A(:,j);
if k > 1
    % V(j,1:k-1)' is 1 x (k-1)
    % Product U(:,1:k-1) * V(j,1:k-1)' is m x 1
    c = c - U(:,1:k-1) * V(j,1:k-1).';
end
\end{lstlisting}

\subsubsection{3. Quaternion Normalization}

\begin{lstlisting}[caption=Quaternion normalization using inverse]
% Get pivot value (quaternion scalar)
alpha = r(j);

% Check for near-zero pivot
if abs(alpha) < 1e-15
    break;
end

% Normalize using quaternion inverse
V(:,k) = (1/alpha) * r.';  % r' is n x 1, divided by alpha
\end{lstlisting}

\subsubsection{4. Pivot Selection Strategy}

\begin{lstlisting}[caption=Adaptive pivot selection with row/column tracking]
selected_rows = false(m,1);
selected_cols = false(n,1);

% Initial row - find first non-zero row
i = 1;
for row_idx = 1:m
    if norm(A(row_idx,:)) > 0
        i = row_idx;
        break;
    end
end
selected_rows(i) = true;

for k = 1:maxrank
    % Select pivot column from unselected columns
    unselected_cols = find(~selected_cols);
    if isempty(unselected_cols)
        break;
    end
    
    [pivot_val, idx] = max(abs(r(unselected_cols)));
    j = unselected_cols(idx);
    
    if pivot_val <= tol * Anorm
        break
    end
    
    % Select next row from unselected rows
    unselected_rows = find(~selected_rows);
    if isempty(unselected_rows)
        break;
    end
    
    [~, idx] = max(abs(c(unselected_rows)));
    i = unselected_rows(idx);
    selected_rows(i) = true;
    selected_cols(j) = true;
end
\end{lstlisting}

\subsubsection{5. Error Computation}

\begin{lstlisting}[caption=Frobenius norm error computation]
% Compute approximation error using Frobenius norm
if r > 0
    Aapprox = U*V.';  % Now correctly m x n
    info.rank = r;
    info.error = norm(A-Aapprox,'fro') / Anorm;
else
    info.rank = 0;
    info.error = 1;
end
\end{lstlisting}

\subsection{Complete Implementation}

\begin{lstlisting}
 function [U,V,info] = QACA(A,maxrank,tol)

[m,n] = size(A);

U = quaternion(zeros(m,maxrank),zeros(m,maxrank),zeros(m,maxrank),zeros(m,maxrank));
V = quaternion(zeros(n,maxrank),zeros(n,maxrank),zeros(n,maxrank),zeros(n,maxrank));

Anorm = norm(A,'fro');

if Anorm == 0
    U = [];
    V = [];
    info.rank = 0;
    info.error = 0;
    return
end

selected_rows = false(m,1);
selected_cols = false(n,1);

% Initial row - find first non-zero row
i = 1;
for row_idx = 1:m
    if norm(A(row_idx,:)) > 0
        i = row_idx;
        break;
    end
end

if i > m
    U = [];
    V = [];
    info.rank = 0;
    info.error = 0;
    return;
end

selected_rows(i) = true;

r = 0;   % actual rank reached

for k = 1:maxrank

    %% Compute residual row
    r_row = A(i,:);

    if k > 1
        r_row = r_row - U(i,1:k-1) * V(:,1:k-1).';
    end

    %% Select pivot column from unselected columns
    unselected_cols = find(~selected_cols);
    if isempty(unselected_cols)
        break;
    end

    [pivot_val, idx] = max(abs(r_row(unselected_cols)));
    j = unselected_cols(idx);

    if pivot_val <= tol * Anorm
        %% ---- Random sampling verification step (from algorithm) ----
        % Build current approximation from first k-1 terms
        if k > 1
            Atilde = U(:,1:k-1) * V(:,1:k-1).';
        else
            Atilde = quaternion(zeros(m,n),zeros(m,n),zeros(m,n),zeros(m,n));
        end

        % Sample m+n random (row, col) indices
        numSamples = m + n;
        is = randi(m, numSamples, 1);
        js = randi(n, numSamples, 1);

        % Compute errors on sampled entries
        errS_sq = 0;
        nrmS_sq = 0;
        maxAbsDiff = 0;
        best_i = is(1);
        best_j = js(1);

        for s = 1:numSamples
            a_val   = A(is(s), js(s));
            at_val  = Atilde(is(s), js(s));
            diff    = abs(a_val - at_val);

            errS_sq = errS_sq + diff^2;
            nrmS_sq = nrmS_sq + abs(a_val)^2;

            if diff > maxAbsDiff
                maxAbsDiff = diff;
                best_i = is(s);
                best_j = js(s);
            end
        end

        errS = sqrt(errS_sq);
        nrmS = sqrt(nrmS_sq);

        if errS <= tol * nrmS
            % Accept current rank as final
            r = k - 1;
            break;
        else
            % Sampling suggests matrix is not yet resolved.
            % Continue with the sampled max entry as the new pivot.
            i = best_i;
            j = best_j;

            % Make sure these are not already selected (optional safety)
            selected_rows(i) = true;
            selected_cols(j) = true;

            % Fall through to compute the rank-1 update using this pivot
        end
        %% -----------------------------------------------------------
    end

    alpha = r_row(j);

    if abs(alpha) < 1e-15
        break;
    end

    %% Compute residual column
    c = A(:,j);

    if k > 1
        c = c - U(:,1:k-1) * V(j,1:k-1).';
    end

    %% New rank-one term
    U(:,k) = c;
    V(:,k) = (1/alpha)*r_row.';

    %% Select next row from unselected rows
    unselected_rows = find(~selected_rows);
    if isempty(unselected_rows)
        r = k;
        break;
    end

    [~, idx] = max(abs(c(unselected_rows)));
    i = unselected_rows(idx);
    selected_rows(i) = true;
    selected_cols(j) = true;

    r = k;   % update rank tracker
end

if r > 0
    U = U(:,1:r);
    V = V(:,1:r);
else
    U = [];
    V = [];
end

r = k - 1;

%% Information
if r > 0
    Aapprox = U*V.';
    info.rank = r;
    info.error = norm(A-Aapprox,'fro') / Anorm;
else
    info.rank = 0;
    info.error = 1;
end

end   
\end{lstlisting}

% The implementation includes several important features:

% \begin{enumerate}
%     \item \textbf{Memory Efficiency:} Pre-allocates quaternion arrays with maximum rank to avoid dynamic resizing.
%     \item \textbf{Row/Column Tracking:} Maintains boolean arrays to track selected rows and columns, preventing redundant selections.
%     \item \textbf{Adaptive Pivot Selection:} Selects pivot columns based on residual magnitude and pivot rows based on column residual magnitude.
%     \item \textbf{Robust Termination:} Multiple stopping criteria including tolerance check, pivot threshold, and maximum rank.
%     \item \textbf{Correct Quaternion Arithmetic:} Properly handles quaternion operations including multiplication with  transpose (`.''`).
% \end{enumerate}

\end{document}